\numberwithin{equation}{section}
\newtheorem{thm}{Theorem}[section]
\newtheorem{lma}[thm]{Lemma}
\newtheorem{cor}[thm]{Corollary}
\newtheorem{defn}[thm]{Definition}
\newtheorem{prop}[thm]{Proposition}
\newtheorem{ques}[thm]{Question}
\renewcommand{\geq}{\geqslant}
\renewcommand{\leq}{\leqslant}
\renewcommand{\H}{\text{H}}
\renewcommand{\P}{\text{P}}
\title{The Hausdorff dimension of graphs of prevalent continuous functions}
\author{J. M. Fraser and J. T. Hyde\\ \\
\emph{Mathematical Institute, University of St. Andrews, North Haugh,}\\ \emph{St. Andrews, Fife, KY16 9SS, Scotland}}
\begin{document}
\maketitle

\begin{abstract}
We prove that the Hausdorff dimension of the graph of a prevalent continuous function is 2.  We also indicate how our results can be extended to the space of continuous functions on $[0,1]^d$ for $d \in \mathbb{N}$ and use this to obtain results on the `horizon problem' for fractal surfaces.  We begin with a survey of previous results on the dimension of a generic continuous function.
\\ \\
\emph{Mathematics Subject Classification} 2010:  Primary: 28A80, 28A78; Secondary: 54E52.
\\ \\
\emph{Key words and phrases}:  Hausdorff dimension, prevalence, continuous functions, Baire category, typical, horizons.
\end{abstract}

\section{Introduction}

We investigate the Hausdorff dimension of the graph of a prevalent continuous function.  For $d \in \mathbb{N}$ let
\[
C[0,1]^d = \{f:[0,1]^d \to \mathbb{R}  \mid  f \text{ is continuous} \}.
\]
This is a Banach space when equipped with the infinity norm, $\| \cdot \|_\infty$.  We define the graph of a function, $f \in C[0,1]^d$, to be the set
\[
G_f = \Big\{ (x,f(x)) \mid x \in [0,1]^d \Big\} \subset \mathbb{R}^{d+1}.
\]

\subsection{Dimensions of generic continuous functions}

Over the past 25 years several papers have investigated the question:
\begin{equation} \label{genericdim}
\text{What is the `dimension' of the graph of a `generic' continuous function?}
\end{equation}
Here `dimension' could mean any of the following dimensions used to study fractal sets:
\begin{itemize}
\item[(1)] Hausdorff dimension, denoted by $\dim_\H$;
\item[(2)] lower box dimension, denoted by $\underline{\dim}_\text{B}$;
\item[(3)] upper box dimension, denoted by $\overline{\dim}_\text{B}$;
\item[(4)] lower modified box dimension, denoted by $\underline{\dim}_\text{MB}$;
\item[(5)] upper modified box dimension, denoted by $\overline{\dim}_\text{MB}$, or equivalently, packing dimension, denoted by $\dim_\P$.
\end{itemize}
For definitions and basic properties of these dimensions see \cite{falconer}.  In particular, note the following well-known proposition.

\begin{prop} \label{relationships}
For a bounded set $F \subset \mathbb{R}^d$ we have the following relationships between the dimensions discussed above:

\[
\begin{array}{ccccccccccc}
 & & & &                                                            &&                  \dim_\text{\emph{P}} F  \quad  =   \quad   \overline{\dim}_\text{\emph{MB}} F                         & & & &   \\
 & & &          &&                       \rotatebox[origin=c]{45}{$\leq$}          & &              \rotatebox[origin=c]{315}{$\leq$} & & &  \\
0 &   \leq & \dim_\text{\emph{H}} F & \leq & \underline{\dim}_\text{\emph{MB}} F                                            & &        &&                  \overline{\dim}_\text{\emph{B}} F  &     \leq & d\\
 & & &                 &&                \rotatebox[origin=c]{315}{$\leq$}              &&           \rotatebox[origin=c]{45}{$\leq$} & & &  \\
 & & & &                           &&                                          \underline{\dim}_\text{\emph{B}} F                         & & & & 
\end{array}
\]
\end{prop}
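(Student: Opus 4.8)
The plan is to prove each inequality in the diagram separately, dispatching the bulk of them directly from the definitions and isolating the one genuinely substantial step. Throughout, write $N_\delta(F)$ for the least number of sets of diameter at most $\delta$ needed to cover $F$, so that $\underline{\dim}_\text{B} F = \liminf_{\delta \to 0} \frac{\log N_\delta(F)}{-\log \delta}$ and $\overline{\dim}_\text{B} F = \limsup_{\delta \to 0} \frac{\log N_\delta(F)}{-\log \delta}$; the modified box dimensions are obtained by taking the infimum of $\sup_i \underline{\dim}_\text{B} F_i$ (respectively $\sup_i \overline{\dim}_\text{B} F_i$) over all countable covers $F \subseteq \bigcup_i F_i$, and $\dim_\H F = \inf\{s \ge 0 : \mathcal{H}^s(F) = 0\}$.

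The routine inequalities I would handle as follows. Non-negativity of $\dim_\H$ is immediate, and $\underline{\dim}_\text{B} F \le \overline{\dim}_\text{B} F$ holds since $\liminf \le \limsup$. For $\overline{\dim}_\text{B} F \le d$, as $F$ is bounded I would cover it by the cubes of side $\delta$ that meet $F$, of which there are at most $C\delta^{-d}$, giving $N_\delta(F) \le C\delta^{-d}$ and hence the bound. The inequalities $\underline{\dim}_\text{MB} F \le \underline{\dim}_\text{B} F$ and $\overline{\dim}_\text{MB} F \le \overline{\dim}_\text{B} F$ follow at once by using the single-set cover $\{F\}$ in the defining infima, and $\underline{\dim}_\text{MB} F \le \overline{\dim}_\text{MB} F$ follows since $\underline{\dim}_\text{B} F_i \le \overline{\dim}_\text{B} F_i$ term by term and the infimum over covers respects this. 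The identity $\dim_\P F = \overline{\dim}_\text{MB} F$ is Tricot's characterisation of packing dimension, for which I would simply cite \cite{falconer}.

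The one substantial step is $\dim_\H F \le \underline{\dim}_\text{MB} F$, which I would build in two stages. First, for an arbitrary bounded set $E$ I claim $\dim_\H E \le \underline{\dim}_\text{B} E$: fixing $s > t > \underline{\dim}_\text{B} E$, the definition of the lower limit supplies a sequence $\delta_k \to 0$ with $N_{\delta_k}(E) \le \delta_k^{-t}$, so covering $E$ by $N_{\delta_k}(E)$ sets of diameter $\delta_k$ gives $\mathcal{H}^s_{\delta_k}(E) \le N_{\delta_k}(E)\,\delta_k^{\,s} \le \delta_k^{\,s-t} \to 0$, whence $\mathcal{H}^s(E) = 0$ and $\dim_\H E \le s$; letting $s \downarrow \underline{\dim}_\text{B} E$ proves the claim. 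Second, I would invoke the countable stability of Hausdorff dimension, $\dim_\H \bigcup_i F_i = \sup_i \dim_\H F_i$: for any countable cover $F \subseteq \bigcup_i F_i$ this yields $\dim_\H F = \sup_i \dim_\H F_i \le \sup_i \underline{\dim}_\text{B} F_i$, and taking the infimum over all such covers gives $\dim_\H F \le \underline{\dim}_\text{MB} F$.

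The main obstacle is precisely this last inequality. The naive bound $\dim_\H F \le \underline{\dim}_\text{B} F$ is too weak to close the diagram, and the key is that countable stability of Hausdorff dimension — which box dimension crucially lacks — permits the passage to the finer modified box dimension. The careful choice of the intermediate exponent $t$ with $s > t > \underline{\dim}_\text{B} E$ is what forces the Hausdorff sums to vanish in the limit rather than merely remain bounded.
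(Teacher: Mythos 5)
Your proof is correct; the one genuinely substantive inequality, $\dim_\text{H} F \leq \underline{\dim}_\text{MB} F$, is handled properly via the standard covering estimate $\mathcal{H}^s_{\delta_k}(E) \leq N_{\delta_k}(E)\,\delta_k^{\,s}$ together with countable stability of Hausdorff dimension. The paper itself offers no proof of this proposition, simply labelling it well known and referring to \cite{falconer}; your argument is essentially the standard one found there, so there is nothing to compare beyond noting that you have supplied the details the authors chose to omit.
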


Also, there are different ways of interpreting the word `generic' in question (\ref{genericdim}).  We will focus on the following possibilities:
\begin{itemize}
\item[(1)] prevalent, i.e. `generic' from a measure theoretical point of view;
\item[(2)] typical,  i.e. `generic' from a topological point of view.
\end{itemize}

In this paper we will complete the study of question (\ref{genericdim}) in the above contexts and, in particular, show that a \emph{prevalent} continuous function has a graph with Hausdorff dimension 2.  We will also consider dimensions of graphs of prevalent functions in $C[0,1]^d$ and the `horizon problem' for prevalent surfaces.

\subsection{Prevalence}

`Prevalence' provides one way of describing the \emph{generic} behavior of a class of mathematical
objects.  In finite dimensional vector spaces Lebesgue measure provides a natural tool for deciding if a property is `generic'.  Namely, if the set of elements which do not have some property is a Lebesgue null set then it is said that this property is `generic' from a measure theoretical point of view.  However, when the space in question is infinite dimensional this approach breaks down because there is no useful analogue to Lebesgue measure in the infinite dimensional setting.  The theory of prevalence has been developed to solve this problem.  It was first introduced in the general setting of abelian Polish groups by Christensen in the 1970s \cite{christ, christ2} and later rediscovered by Hunt, Sauer and Yorke in 1992 \cite{prevalence1}.  Also, see the excellent survey paper \cite{prevalence}.
\\ \\
Since the space we are interested in, namely $(C[0,1], \| \cdot \|_\infty)$, is infinite dimensional and we wish to say something about the behavior of a \emph{generic} function it is natural to appeal to the theory of prevalence.  We will now give a brief reminder of the definitions we will need.

\begin{defn}
A \emph{completely metrizable topological vector space} is a vector space, $X$, for which there exists a metric, $d$, on $X$ such that $(X,d)$ is complete and the vector space operations are continuous with respect to the topology induced by $d$.
\end{defn}

Note that $(C[0,1], \| \cdot \|_\infty)$ is a completely metrizable topological vector space with the topology induced by the norm.

\begin{defn} \label{prevalentdef}
Let $X$ be a completely metrizable topological vector space.  A Borel set $F \subseteq X$ is \emph{prevalent} if there exists a Borel measure $\mu$ on $X$ and a compact set $K \subseteq X$ such that $0<\mu(K) < \infty$ and
\[
\mu\big(X \setminus (F+x)\big) = 0
\]
for all $x \in X$.
\\ \\
A non-Borel set $F \subseteq X$ is \emph{prevalent} if it contains a prevalent Borel set and the complement of a prevalent set is called a \emph{shy} set.
\end{defn}

Shyness is a reasonable generalisation of Lebesgue measure zero to the infinite dimensional setting.  It enjoys many of the natural properties which one would expect from such a generalisation, for example, it is preserved under taking countable unions, and, in particular, in $\mathbb{R}^d$ being shy is equivalent to having Lebesgue measure zero.  For more details see \cite{prevalence}.

\subsection{Baire category}

Prevalence is a measure theoretic approach to describing \emph{generic} behavior.  One can also consider \emph{generic} behaviour from a topological point of view using ideas from Baire category.
\\ \\
Let $X$ be a complete metric space.  A set $M$ is called \emph{meagre} if it can be written as a countable union of nowhere dense sets.  A property is called \emph{typical} if the set of points which \emph{do not} have the property is meagre.  For a more detailed account of Baire category the reader is referred to \cite{oxtoby}.

\subsection{History}

The study of different dimensional aspects of generic continuous functions has attracted much attention in the literature, see, for example, \cite{zoltan, genericmf, genericprop,  lowerprevalent, FPconj}.  In particular, over the past 25 years there has been considerable interest in answering question (\ref{genericdim}). The problem has been considered from a topological point of view, i.e., using Baire category, in \cite{humkepacking, bairefunctions, graphsums} and from a measure theoretical point of view, i.e., using prevalence, in \cite{me_horizon, lowerprevalent, mcclure, shaw}.  In fact, the question has been completely answered in the Baire category case.
\begin{thm} \label{typicalresult}
A typical function $f \in C[0,1]$ satisfies:
\[
\dim_\text{\emph{H}} G_f =  \underline{\dim}_\text{\emph{MB}} G_f = \underline{\dim}_\text{\emph{B}} G_f = \, 1 \, \, <  \, \, 2 \, = \dim_\text{\emph{P}} G_f = \overline{\dim}_\text{\emph{MB}} G_f = \overline{\dim}_\text{\emph{B}} G_f.
\]
\end{thm}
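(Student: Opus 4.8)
The plan is to prove the lower string of equalities and the upper string separately, using Proposition \ref{relationships} to reduce each to a single box-counting statement. First I would record the universal bounds holding for \emph{every} $f \in C[0,1]$: the projection $(x,y)\mapsto x$ is Lipschitz and carries $G_f$ onto $[0,1]$, so $\dim_\H G_f \ge 1$, while $G_f$ is a bounded subset of $\mathbb{R}^2$, so $\overline{\dim}_\text{B} G_f \le 2$; by Proposition \ref{relationships} all six dimensions then lie in $[1,2]$ for every $f$. Hence it suffices to show that
\[
\mathcal{L} = \{ f : \underline{\dim}_\text{B} G_f = 1 \}, \qquad \mathcal{U} = \{ f : \dim_\P G_f = 2 \}
\]
are each residual: on $\mathcal{L}$ the chain $1 \le \dim_\H \le \underline{\dim}_\text{MB} \le \underline{\dim}_\text{B} = 1$ collapses the three lower dimensions to $1$, and on $\mathcal{U}$ the chain $2 = \dim_\P = \overline{\dim}_\text{MB} \le \overline{\dim}_\text{B} \le 2$ collapses the three upper ones to $2$. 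Throughout I would work with the oscillation count $C_\delta(f) = \sum_j \big( \lceil \delta^{-1}\sup_{I_j} f\rceil - \lfloor \delta^{-1}\inf_{I_j} f\rfloor\big)$ over the dyadic length-$\delta$ intervals $I_j$, which is comparable to the number of $\delta$-mesh squares meeting $G_f$ and so computes the box dimensions. The one estimate I would use repeatedly is the perturbation bound $|C_\delta(f) - C_\delta(g)| \le 2\delta^{-1}$ whenever $\|f-g\|_\infty < \delta$, immediate from $\sup_{I_j}$ and $\inf_{I_j}$ being $1$-Lipschitz in $f$.

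For $\mathcal{L}$ I would show the complement is meagre. Writing $\{\underline{\dim}_\text{B} G_f > 1\} = \bigcup_{m,n} S_{m,n}$ with $S_{m,n} = \{ f : C_\delta(f) \ge \delta^{-(1+1/m)} \text{ for all dyadic } \delta < 1/n\}$, it is enough to prove each $S_{m,n}$ nowhere dense. Given any ball I would first move to a nearby piecewise-linear, hence $L$-Lipschitz, function $f$, for which $\mathrm{osc}_{I_j} f \le L\delta$ gives $C_\delta(f) \lesssim (L+1)\delta^{-1} < \tfrac12\delta^{-(1+1/m)}$ at all small dyadic scales; the perturbation bound then yields $C_{\delta_0}(g) < \delta_0^{-(1+1/m)}$ for every $g$ in a small ball about $f$, so that ball misses $S_{m,n}$. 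This is the easy half, resting only on density of Lipschitz functions together with the fact that Lipschitz graphs have box dimension $1$.

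For $\mathcal{U}$ the key reduction is the standard fact (see \cite{falconer}) that if a compact set $F$ satisfies $\overline{\dim}_\text{B}(F \cap V) = \overline{\dim}_\text{B} F$ for every open $V$ meeting $F$, then $\dim_\P F = \overline{\dim}_\text{B} F$; for a graph this localises to subintervals, so it suffices that $\overline{\dim}_\text{B}\big(G_f|_J\big) = 2$ for every subinterval $J$ with rational endpoints, a countable family of conditions. For fixed $J$ I would show the bad set $\bigcup_{m,n}\{ f : C_\delta(G_f|_J) \le \delta^{-(2-1/m)} \text{ for all dyadic } \delta < 1/n\}$ is a countable union of nowhere dense sets: given any ball I would add to a nearby function a small-amplitude, very-high-frequency sawtooth supported in $J$, forcing $\mathrm{osc}_{I_j} f \approx \varepsilon$ on each length-$\delta_0$ subinterval of $J$ and hence $C_{\delta_0}(G_f|_J) \approx \varepsilon\,\delta_0^{-2} > \delta_0^{-(2-1/m)}$, the perturbation bound again propagating the strict inequality to a whole ball. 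Intersecting over $J$ produces a residual set on which $\dim_\P G_f = 2$.

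Finally $\mathcal{L} \cap \mathcal{U}$ is residual as a countable intersection of residual sets, and every $f$ in it satisfies the stated equalities and the strict inequality $1 < 2$. I expect the main obstacle to be the upper half: not the estimate $\overline{\dim}_\text{B} G_f = 2$, which the sawtooth construction delivers directly, but the upgrade to \emph{packing} (equivalently modified box) dimension, which is genuinely stronger and forces one to arrange large oscillation \emph{on every subinterval simultaneously} and to invoke the local-box-dimension lemma. A secondary technical point, absorbed uniformly by the perturbation bound, is that $C_\delta$ is only semicontinuous in $f$, so the defining conditions are not open; this is why I phrase everything through nowhere-density of the bad sets rather than openness of the good ones.
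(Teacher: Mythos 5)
Your reduction is exactly the paper's: by Proposition \ref{relationships} the whole theorem collapses to the two statements that a typical $f\in C[0,1]$ has $\underline{\dim}_\text{B}\,G_f=1$ and $\dim_\text{P}\,G_f=2$. The difference is that the paper's proof stops at this reduction and simply cites \cite{bairefunctions} for the first statement and \cite{humkepacking} for the second, whereas you go on to prove both. Your two Baire-category arguments are correct and are essentially the standard ones underlying those references: for the lower half, density of piecewise-linear (Lipschitz) functions together with the stability of the column-counting function $C_\delta$ under perturbations of size less than $\delta$ shows each set $S_{m,n}$ is nowhere dense (and you correctly only need to defeat the defining inequality at a single scale $\delta_0$); for the upper half, the high-frequency sawtooth forces oscillation $\approx\varepsilon$ in every column at a suitable scale, and the localisation to rational subintervals plus the criterion that $\dim_\text{P}F=\overline{\dim}_\text{B}F$ when every relatively open piece of the compact set $F$ has full upper box dimension correctly upgrades $\overline{\dim}_\text{B}$ to $\dim_\text{P}$. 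You are also right that this upgrade is the genuinely harder half, and your remark about semicontinuity of $C_\delta$ is exactly why one argues via nowhere-density of the bad sets. In short: same decomposition, but the paper buys brevity by outsourcing the two key facts to the literature, while your version is self-contained.
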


\begin{proof}
In light of Proposition \ref{relationships} it suffices to show that:
\begin{itemize}
\item[(1)] The graph of a \emph{typical} continuous function has lower box dimension 1;
\item[(2)] The graph of a \emph{typical} continuous function has packing dimension 2.
\end{itemize}

Statement (1) was proved in \cite{bairefunctions} and statement (2) was proved in \cite{humkepacking}.
\end{proof}

In the prevalence case the question has been partially answered.  It was shown in \cite{mcclure} that the packing dimension, and hence the upper box dimension, of a prevalent continuous function is 2. 
\\ \\
More recently, it has been shown in \cite{me_horizon, lowerprevalent, shaw} that the lower box dimension of the graph of a prevalent continuous function is also 2.  We remark that this result was probably first obtained in \cite{shaw}.
\\ \\
In fact, in \cite{lowerprevalent} a more general result was proved. Namely, let X be a Banach space and let $\Delta: X \to \mathbb{R}$ be a Borel measurable function such that for all $x, y \in X$ and Lebesgue almost all $t \in \mathbb{R}$ we have
\begin{equation} \label{olsencondition}
\Delta(x - ty) \geq \Delta (y).
\end{equation}
Then a prevalent element $x \in X$ satisfies:
\[
\Delta(x) = \sup_{y \in X} \Delta(y).
\]
This result was then used to show that, among other things, the lower box dimension of the graph of a prevalent function in $C[0,1]$ is as big as possible, namely 2.  Given that we want to show that the Hausdorff dimension of the graph of a prevalent function is also as big as possible it is natural to examine whether or not the function $\Delta_\H : C[0,1] \to \mathbb{R}$ defined by $\Delta_\H(f) = \dim_\H G_f$ satisfies condition (\ref{olsencondition}).  However, we have been unable to prove or disprove this and so we pose the following question:

\begin{ques}
Is it true that for all $f, g \in C[0,1]$ and for Lebesgue almost all $t \in \mathbb{R}$ we have
\[
\dim_\text{\emph{H}} G_{f - t g} \geq  \dim_\text{\emph{H}} G_{g} ?
\]
\end{ques}

Note that our methods do not rely on this approach.

\newpage

\section{Results}

\subsection{Prevalent Hausdorff dimension}

Our main result is the following.

\begin{thm} \label{main}
The set
\[
\{ f \in C[0,1] \mid \dim_\text{\emph{H}} G_f = 2 \}
\]
is a prevalent subset of $C[0,1]$.
\end{thm}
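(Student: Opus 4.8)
The plan is to verify prevalence directly from Definition \ref{prevalentdef} by building a suitable measure $\mu$, rather than by appealing to a condition of the form (\ref{olsencondition}). First I would make two reductions. Since $G_f \subseteq \mathbb{R}^2$ we always have $\dim_\H G_f \le 2$, so it suffices to show that $\{f : \dim_\H G_f \ge 2\}$ is prevalent. Writing this set as the countable intersection $\bigcap_{n} \{f : \dim_\H G_f \ge 2 - 1/n\}$ and recalling that shy sets are closed under countable unions (so prevalent sets are closed under countable intersections), it is enough to prove, for each fixed $s < 2$, that $F_s = \{f : \dim_\H G_f \ge s\}$ is prevalent; note that a different probe measure may be used for each $s$. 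Unwinding the translation in Definition \ref{prevalentdef}, proving $F_s$ prevalent amounts to producing a compactly supported Borel probability measure $\mu = \mu_s$ on $C[0,1]$ such that for every $x \in C[0,1]$ we have $\dim_\H G_{g - x} \ge s$ for $\mu$-almost every $g$.

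The measure I would use is a \emph{random oscillation} probe. Fix a rapidly increasing sequence of integer scales $(N_k)$ and amplitudes $c_k \approx N_k^{-\alpha}$, where $\alpha = \alpha(s) > 0$ is small enough that the associated Weierstrass-type graphs have dimension exceeding $s$. At scale $k$ I would partition $[0,1]$ into $N_k$ intervals and, on each, superimpose a copy of a fixed oscillating profile $\psi$ of amplitude $c_k$ whose sign is an independent fair coin. Letting $\omega$ range over the compact product space $\Omega = \prod_k \{-1,+1\}^{N_k}$ and $\phi_\omega$ denote the resulting sum, the rapid decay of $c_k$ forces uniform convergence, so $\omega \mapsto \phi_\omega$ is continuous and $K = \{\phi_\omega : \omega \in \Omega\}$ is compact. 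Taking $\mu$ to be the pushforward under $\omega \mapsto \phi_\omega$ of the product Bernoulli measure on $\Omega$ gives $0 < \mu(K) = 1 < \infty$, as required.

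For the lower bound, fix $x \in C[0,1]$ and $s<2$. The role of the independent random signs is to make cancellation impossible to arrange on many intervals at once. On an interval $I$ of length $1/N_k$, write $\phi_\omega - x = h \pm v$, where $v = c_k\,\psi(N_k \cdot)$ is the scale-$k$ profile on $I$ and $h$ collects all other terms together with $-x$. Subadditivity of oscillation gives $\mathrm{osc}(h+v,I) + \mathrm{osc}(h-v,I) \ge \mathrm{osc}(2v,I) \gtrsim c_k$, so for at least one choice of sign the oscillation of $\phi_\omega - x$ on $I$ is $\gtrsim c_k$; as the sign on $I$ is an independent fair coin, this happens with probability at least $1/2$. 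Conditioning on all other coordinates, the events ``$I$ survives'' are independent across the $N_k$ intervals at scale $k$, so a concentration estimate followed by the Borel--Cantelli lemma shows that for $\mu$-almost every $g = \phi_\omega$ a definite proportion of intervals survive at every large scale $k$. On the surviving intervals I would run a nested Cantor construction inside $[0,1]$, push its natural mass distribution onto the graph $G_{g-x}$, and bound the measure of balls of radius $r$: the horizontal spacing contributes exponent $\to 1$ as $N_k \to \infty$, while the retained oscillation $c_k \approx N_k^{-\alpha}$ supplies the remaining exponent, so the mass distribution principle gives $\dim_\H G_{g-x}\ge s$. Intersecting over $s = 2-1/n$ then yields $\dim_\H G_{g-x} = 2$ for $\mu$-a.e. $g$.

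The main obstacle is precisely this robustness against an arbitrary fixed $x$: unlike box-dimension arguments, a Hausdorff lower bound requires controlling the oscillation of $g - x$ simultaneously on enough intervals and at every small scale, and any single deterministic probe can be defeated by an $x$ that nearly coincides with it. Randomizing across scales and locations is what defeats such cancellation, and the heart of the write-up is making the ``definite proportion of surviving intervals at every scale'' statement precise (the concentration plus Borel--Cantelli step) and converting it into a clean Frostman estimate with exponent arbitrarily close to $2$. A secondary technical point is the measurability of $F_s$, needed so that Definition \ref{prevalentdef} applies; this can be established separately from the structure of the map $f \mapsto \dim_\H G_f$.
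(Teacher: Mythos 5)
Your overall architecture (a compactly supported random probe measure on $C[0,1]$, built so that the randomness defeats cancellation against an arbitrary fixed translate $x$, plus a separate measurability check) matches the paper, and the reduction to $s<2$ via countable intersections of prevalent sets is fine. But the core lower-bound mechanism has a genuine gap: you derive, for $\mu$-almost every $\omega$, that the oscillation of $\phi_\omega - x$ is $\gtrsim c_k$ on a definite proportion of the scale-$k$ intervals, and then claim this converts into a Frostman estimate with exponent near $2$. Oscillation lower bounds of this kind control \emph{box-counting} from below (they count the squares needed to cover the graph) but they do not control the distribution of mass along the graph, which is what a Frostman estimate requires: the lifted Cantor measure of a ball $B(z,r)$ with $r\ll c_k$ depends on the size of the level-type set $\{t\in I:\lvert(\phi_\omega-x)(t)-z_2\rvert\le r\}$, and a function can oscillate by $c_k$ on $I$ while spending almost all of $I$ near a single value. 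The paper's own Theorem \ref{typicalresult} is a concrete warning here: a typical $f\in C[0,1]$ has $\overline{\dim}_\text{B}G_f=2$ (large oscillations at all scales and locations) yet $\dim_\H G_f=1$. So ``a definite proportion of intervals survive at every scale'' cannot by itself supply the vertical exponent.

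What is missing is an anti-concentration (equidistribution) statement for the \emph{values} of $\phi_\omega - x$, not just for its oscillation. This is exactly what the paper's argument extracts: it pushes Lebesgue measure on a fat Cantor set $F$ up to the graph and computes the expected $(2-2\varepsilon)$-energy, using the fact that for $x\neq y$ in $F$ the increment $\phi_\omega(x)-\phi_\omega(y)$ is the difference of two \emph{independent uniform} random variables on $[0,2^{-n(x,y)}]$; the uniformity is what makes $\mathbb{E}\bigl(\,(\lvert x-y\rvert^2+\lvert\phi_\omega(x)+f(x)-\phi_\omega(y)-f(y)\rvert^2)^{\varepsilon-1}\bigr)$ integrable against $\nu\times\nu$ (Lemmas \ref{realintegral} and \ref{integralest}), after which Fubini and Theorem \ref{potential} finish the proof with no Borel--Cantelli or survival-proportion argument at all. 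Your random-sign construction very likely does have the requisite anti-concentration (for fixed $t,t'$ the increment is a sum of independent symmetric terms of size $\approx c_j$), so the fix is to abandon the oscillation/surviving-intervals route and instead bound the expected $s$-energy of a lifted measure directly, as the paper does; as written, the proposal proves something closer to ``prevalent lower box dimension $2$'' than the Hausdorff statement.
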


The proof of this result is deferred to Section \ref{mainproofs}.  The following corollary gives a complete answer to question (\ref{genericdim}) for prevalence.  

\begin{cor}
A prevalent function $f \in C[0,1]$ satisfies:
\[
\dim_\text{\emph{H}} G_f =  \underline{\dim}_\text{\emph{MB}} G_f = \underline{\dim}_\text{\emph{B}} G_f =  \dim_\text{\emph{P}} G_f = \overline{\dim}_\text{\emph{MB}} G_f = \overline{\dim}_\text{\emph{B}} G_f = 2.
\]
\end{cor}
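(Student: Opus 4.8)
The plan is to deduce the corollary immediately from Theorem \ref{main} and Proposition \ref{relationships}; there is essentially no work beyond assembling these two facts. First I would note that $G_f$ is a bounded subset of $\mathbb{R}^2$, so Proposition \ref{relationships} applies with $d = 2$. Reading off the array in that proposition, $\dim_\text{H}$ sits at the smallest (left-hand) end of the displayed inequalities while $d = 2$ bounds every quantity from above, so for every $f \in C[0,1]$ we have
\[
\dim_\text{H} G_f \;\leq\; Y \;\leq\; 2 \quad \text{for each } Y \in \big\{ \underline{\dim}_\text{MB} G_f,\ \overline{\dim}_\text{MB} G_f,\ \underline{\dim}_\text{B} G_f,\ \overline{\dim}_\text{B} G_f,\ \dim_\text{P} G_f \big\}.
\]

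Next I would invoke Theorem \ref{main}, which tells us that the set $P = \{ f \in C[0,1] : \dim_\text{H} G_f = 2 \}$ is prevalent. For $f \in P$ the equality $\dim_\text{H} G_f = 2$ turns the left-hand inequality above into an equality, and since $2$ is simultaneously an upper bound this pins each of the remaining five dimensions to the value $2$ as well. Thus $P$ is contained in (indeed equal to) the set appearing in the corollary.

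Finally, since a set containing a prevalent set is itself prevalent (Definition \ref{prevalentdef}), the set on which all six dimensions of $G_f$ equal $2$ is prevalent, which is exactly the assertion of the corollary. I do not expect any genuine obstacle here: all the substance is carried by Theorem \ref{main}, and the corollary is a purely formal consequence of the sandwiching supplied by Proposition \ref{relationships} together with the fact that the graph lies in the plane.
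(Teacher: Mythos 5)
Your proposal is correct and is exactly the paper's argument: the corollary is stated there as following immediately from Theorem \ref{main} together with the sandwiching of all the dimensions between $\dim_\text{H}$ and $d=2$ in Proposition \ref{relationships}. You simply spell out the (routine) details that the paper leaves implicit.
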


\begin{proof}
This follows immediately from Theorem \ref{main} and Proposition \ref{relationships}.
\end{proof}

This result should be compared with Theorem \ref{typicalresult}.  In particular, the Hausdorff dimension of the graph of a \emph{typical} continuous function and that of a \emph{prevalent} continuous function are as different as possible.  In fact, measure theoretic and topological approaches often give contrasting answers to questions involving generic behaviour.  For example, the $L^q$-dimensions of a \emph{generic measure} were considered by Olsen from a topological point of view in \cite{typlq1, typlq2}, and from a measure theoretic point of view in \cite{prevlq}, and starkly different results were obtained.
\\ \\
Although a prevalent function, $f \in C[0,1]$, satisfies $\dim_\H G_f = 2$, it follows immediately from Fubini's Theorem that $\mathcal{H}^{2}(G_f) = 0$ for all $f \in C[0,1]$, where $\mathcal{H}^{2}$ denotes 2-dimensional Hausdorff measure.  In light of this it may be interesting to investigate the Hausdorff dimension of the graph of a prevalent continuous function using different \emph{gauge functions}.  For example, it may be true that the graph of a prevalent continuous function has positive and finite $\mathcal{H}^h$-measure for a gauge function something like
\[
h(t) = t^{2} \log \log (1/t)
\]
thus indicating that the graph of a prevalent continuous function has dimension `logarithmically smaller' than 2.  For more details on this finer approach to Hausdorff dimension see \cite{falconer, rogers}.
\\ \\
We also obtain the following higher dimensional analogue of Theorem \ref{main}.

\begin{thm} \label{higherdim}
Let $d \in \mathbb{N}$.  The set
\[
\{ f \in C[0,1]^d \mid \dim_\text{\emph{H}} G_f = d+1 \}
\]
is a prevalent subset of $C[0,1]^d$.
\end{thm}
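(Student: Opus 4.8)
The plan is to reduce Theorem \ref{higherdim} to the one-dimensional case, Theorem \ref{main}, by a fibering (slicing) argument, so that essentially no new roughness analysis is required. Since $G_f \subseteq \mathbb{R}^{d+1}$ we always have $\dim_\H G_f \le d+1$, and so it suffices to show that the lower bound $\dim_\H G_f \ge d+1$ holds prevalently. To produce the data required by Definition \ref{prevalentdef} I would recycle the probe from the one-dimensional result: let $\mu_1$ and $K_1 \subseteq C[0,1]$ be the Borel measure and compact set furnished by the proof of Theorem \ref{main} (with $\mu_1$ supported on $K_1$, so $0<\mu_1(K_1)<\infty$), and let $\iota : C[0,1] \to C[0,1]^d$ be the isometric embedding $\iota(V)(u,t) = V(t)$, where I write a point of $[0,1]^d$ as $(u,t)$ with $u \in [0,1]^{d-1}$ and $t \in [0,1]$. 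Then $K := \iota(K_1)$ is compact, $\mu := \iota_* \mu_1$ is Borel with $0 < \mu(K) < \infty$, and these are my candidate witnessing measure and compact set.

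Unravelling Definition \ref{prevalentdef}, it then remains to show that for every $g \in C[0,1]^d$ one has $\dim_\H G_{g + \iota(V)} = d+1$ for $\mu_1$-almost every $V \in C[0,1]$. Fix such a $g$ and write $g_u = g(u,\cdot) \in C[0,1]$; the map $u \mapsto g_u$ is continuous by uniform continuity of $g$ on $[0,1]^d$. For $f = g + \iota(V)$ the slice in the last variable is $f_u = g_u + V$, and the section of the graph $G_f \subseteq \mathbb{R}^{d-1} \times \mathbb{R}^2$ above a fixed $u$ is exactly a translate of the planar graph $G_{f_u} = G_{g_u + V}$. The content of Theorem \ref{main}, read through Definition \ref{prevalentdef}, is precisely that for each fixed $h \in C[0,1]$ one has $\dim_\H G_{h + V} = 2$ for $\mu_1$-almost every $V$; applying this with $h = g_u$ for each fixed $u$ shows that the exceptional set $B = \{(u,V) : \dim_\H G_{g_u + V} < 2\}$ has every $u$-section of $\mu_1$-measure zero.

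The next step is a Fubini argument whose only genuine input is measurability. The map $(u,V) \mapsto \dim_\H G_{g_u + V}$ is jointly measurable, since $u \mapsto g_u$ and $(h,V) \mapsto h + V$ are continuous and $h \mapsto \dim_\H G_h$ is Borel measurable on $C[0,1]$ (this last fact following from standard results on the measurability of Hausdorff dimension, cf. \cite{mattilamauldin}). Hence $B$ is measurable for the finite product $\mathcal{L}^{d-1} \times \mu_1$, and as each $u$-section is $\mu_1$-null, Fubini's theorem gives that for $\mu_1$-almost every $V$ the set of $u \in [0,1]^{d-1}$ with $\dim_\H G_{g_u + V} = 2$ has full $\mathcal{L}^{d-1}$-measure, and therefore Hausdorff dimension $d-1$.

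Finally I would invoke the standard lower bound for the Hausdorff dimension of a set in terms of its sections (see \cite{falconer}): if $E \subseteq \mathbb{R}^{d-1} \times \mathbb{R}^2$ is Borel and the sections $E_u$ satisfy $\dim_\H E_u \ge s$ for all $u$ in a set $A$ with $\dim_\H A \ge t$, then $\dim_\H E \ge s + t$. Taking $E = G_f$ (compact, being the graph of a continuous function), $A$ a Borel full-measure subset of the good set of $u$ produced above, $s = 2$ and $t = d-1$, this yields $\dim_\H G_{g + \iota(V)} \ge (d-1) + 2 = d+1$ for $\mu_1$-almost every $V$; combined with the trivial upper bound this gives equality, which verifies prevalence. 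I expect the main obstacle to be not any single hard estimate but the careful bookkeeping of measurability needed to legitimise the Fubini step together with the correct quotation of the sectional dimension inequality; all of the delicate work that forces the dimension up is inherited from the proof of Theorem \ref{main} and is exploited here slice-by-slice.
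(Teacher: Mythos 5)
Your proof is correct in outline, and it takes a genuinely different route from the paper for the key lower bound. The witnessing object is essentially the same in both arguments: the paper's sketch also probes $C[0,1]^d$ with functions depending on a single coordinate, setting $\phi_{d,\omega}(x_1,\dots,x_d)=\phi_\omega(x_1)$, $K_d=\Phi_d(\Omega)$ and $\mu_d=\mathbb{P}\circ\Phi_d^{-1}$, which is exactly your $\iota(K_1)$ and $\iota_*\mu_1$. The difference is in how $\dim_\H G_{\phi+f}\ge d+1$ is established: the paper proves the analogue of Lemma \ref{key} directly, by lifting a product measure on $F\times[0,1]^{d-1}$ to the graph in $\mathbb{R}^{d+1}$ and re-running the expected-energy computation (so Lemmas \ref{realintegral} and \ref{integralest} get $(d+1)$-dimensional analogues), whereas you reuse the one-dimensional result slice-by-slice and then invoke Fubini plus Marstrand's section inequality ($\dim_\H E\ge s+t$ when $\dim_\H E_u\ge s$ for all $u$ in a set of dimension $t$; Falconer, Corollary 7.12 and its higher-dimensional version). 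Your route buys a genuine economy --- no new integral estimates --- at the cost of importing the slicing inequality and the joint-measurability bookkeeping; the paper's route stays entirely within the potential-theoretic framework already set up. Two small points to tighten: what you actually need is the content of Lemma \ref{key} (the statement ``for each fixed $h$, for $\mu_1$-a.e.\ $V$, $\dim_\H G_{h+V}=2$''), not the bare prevalence assertion of Theorem \ref{main} --- you do effectively use this by taking the measure from the proof, but the citation should be to the lemma; and you should also record that $\{f\in C[0,1]^d \mid \dim_\H G_f=d+1\}$ is Borel, by the same argument as Lemma \ref{borelset}, since Definition \ref{prevalentdef} requires this before the translation test is applied.
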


The proof of this is very similar to the proof of Theorem \ref{main} and, therefore, we only give a sketch proof in Section \ref{smallproofs}.

\subsection{The horizon problem}

The `horizon problem'  is the study of the relationship between the dimension of the graph of a fractal surface, $f \in C[0,1]^2$, and the dimension of the graph of its horizon.

\begin{defn}
Let $f \in C[0,1]^2$.  The horizon function, $H(f) \in C[0,1]$, of $f$ is defined by
\[
H(f)(x) = \sup_{y \in [0,1]} f(x,y).
\]
\end{defn}
A `rule of thumb' is that the dimension of the horizon should be one less than the dimension of the surface.  In this case we will say that the surface satisfies the `horizon property'.  Note that the horizon property clearly does not hold in general.
\\ \\
The horizon problem was considered from a generic point of view in \cite{me_horizon}.  In particular, it was shown that a prevalent function in $C[0,1]^2$ satisfies the horizon property for box dimension.  Here we obtain the following result.

\begin{thm} \label{hausdorffhorizon}
The set
\[
\{f \in C[0,1]^2 \mid \dim_\text{\emph{H}} G_f = 3  \text{ and } \dim_\text{\emph{H}} G_{H(f)} = 2\}
\]
is a prevalent subset of $C[0,1]^2$.  In particular, a prevalent function satisfies the horizon property for Hausdorff dimension (and packing dimension).
\end{thm}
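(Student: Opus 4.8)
The plan is to split the desired property into its two constituent parts and handle them independently. The condition $\dim_\H G_f = 3$ already defines a prevalent set by Theorem~\ref{higherdim} applied with $d=2$, so the whole task reduces to proving that
\[
F = \{ f \in C[0,1]^2 \mid \dim_\H G_{H(f)} = 2 \}
\]
is prevalent. The theorem then follows at once, because the complement of the intersection of two prevalent sets is a union of two shy sets and shyness is preserved under finite unions. Since $G_{H(f)} \subset \mathbb{R}^2$ we always have $\dim_\H G_{H(f)} \le 2$, so ``$=2$'' may freely be replaced by ``$\ge 2$'' in the analysis.

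The key observation driving the prevalence of $F$ is that the horizon operator $H$ interacts trivially with perturbations depending only on the first coordinate. Let $\iota : C[0,1] \to C[0,1]^2$ be the linear isometric embedding $(\iota g)(x,y) = g(x)$. For any $f \in C[0,1]^2$ and $\gamma \in C[0,1]$ one computes
\[
H(\iota\gamma - f)(x) = \sup_{y\in[0,1]} \big(\gamma(x) - f(x,y)\big) = \gamma(x) - \inf_{y\in[0,1]} f(x,y).
\]
The function $x \mapsto \inf_y f(x,y)$ belongs to $C[0,1]$, being continuous by the uniform continuity of $f$ on the compact square, so writing $\psi_f = -\inf_y f(\cdot,y) \in C[0,1]$ we obtain the exact identity $H(\iota\gamma - f) = \gamma + \psi_f$. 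Thus a perturbation of $f$ in the direction $\iota\gamma$ induces precisely the perturbation $\gamma$ of the one-dimensional horizon, up to the fixed continuous shift $\psi_f$.

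With this in hand I would transport the probe from the one-dimensional result. By Theorem~\ref{main} there is a Borel measure $\nu$ on $C[0,1]$ and a compact set $K \subset C[0,1]$ with $0<\nu(K)<\infty$ witnessing the prevalence of $F_0 = \{ g \in C[0,1] \mid \dim_\H G_g = 2\}$; that is, $\nu\big(\{\gamma : \dim_\H G_{\gamma-\phi} \ne 2\}\big)=0$ for every $\phi \in C[0,1]$. Let $\mu = \iota_*\nu$ be the pushforward, a Borel measure carried by the compact set $\iota(K)$ with $0<\mu(\iota(K))<\infty$. For arbitrary $f \in C[0,1]^2$ the identity above gives
\[
\mu\big(\{ g : \dim_\H G_{H(g - f)} \ne 2\}\big) = \nu\big(\{\gamma : \dim_\H G_{\gamma + \psi_f} \ne 2\}\big) = 0,
\]
the last equality being the defining property of $\nu$ with $\phi = -\psi_f$. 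Hence $\mu$ witnesses the prevalence of $F$. Measurability is not an obstacle: $H$ is $1$-Lipschitz, hence continuous, and $g \mapsto \dim_\H G_g$ is Borel on $C[0,1]$, so $F = H^{-1}(F_0)$ is Borel.

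I expect no genuinely hard estimate to arise here: once one sees that $x$-independent perturbations pass through $H$ verbatim, the horizon result is a formal consequence of Theorem~\ref{main} via the pushforward of its probe, and the only point needing care is the interchange itself — the continuity of $x \mapsto \inf_y f(x,y)$ and the exactness of the reduction to $\nu$. The packing-dimension half of the final assertion then comes for free: on subsets of $\mathbb{R}^3$ and $\mathbb{R}^2$ respectively one has $\dim_\H \le \dim_\P \le 3$ and $\dim_\H \le \dim_\P \le 2$ by Proposition~\ref{relationships}, so the Hausdorff values force $\dim_\P G_f = 3$ and $\dim_\P G_{H(f)} = 2$ on the very same prevalent set of $f$, which is the horizon property for packing dimension.
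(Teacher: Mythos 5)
Your proposal is correct and follows essentially the same route as the paper: the same decomposition into the two prevalent sets, the same key observation that $y$-independent perturbations pass through the horizon operator verbatim, and the same witnessing measure (your $\iota_*\nu$ is exactly the paper's $\mu_2=\mathbb{P}\circ\Phi_2^{-1}$, since $\Phi_2=\iota\circ\Phi$). Your computation $H(\iota\gamma-f)=\gamma-\inf_y f(\cdot,y)$ is in fact slightly more careful than the paper's, which writes the shift as $-H(f)$ rather than $H(-f)$; the discrepancy is harmless since the one-dimensional result holds for every continuous shift.
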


The proof of this is deferred to Section \ref{horproof}.
\\ \\
Since a prevalent surface has Hausdorff dimension as big as possible, namely 3, this does not give us any information about the horizon dimensions of surfaces with Hausdorff dimension strictly less than 3.  In \cite{me_horizon} this problem was overcome by considering subspaces of $C[0,1]^2$ indexed by $\alpha\in [2,3]$ defined by
\[
C_\alpha[0,1]^2 = \{f \in C[0,1]^2 \mid \overline{\dim}_\text{B} G_{f} \leq \alpha\}.
\]
It was shown that the subset of $C_\alpha[0,1]^2$ consisting of functions which satisfy the horizon property (for box dimension) is \emph{not} prevalent.  In our case this generalisation is not possible because the set
\[
\{f \in C[0,1]^2 \mid \dim_\H G_{f} \leq \alpha\}
\]
is not a vector space for $\alpha<3$ since it is not closed under addition.  To see this, note that it was shown in \cite{graphsums} that the graph of a \emph{typical} function, $f \in C[0,1]$, has Hausdorff dimension 1 and a trivial modification of the arguments used gives that that the graph of a \emph{typical} function, $f \in C[0,1]^2$, has Hausdorff dimension 2 (also as low as possible).  From this it follows that every function $f \in C[0,1]^2$, can be written as the sum of two functions whose graphs have Hausdorff dimension 2.  Hence, for all $\alpha<3$, we can find two functions whose graphs have Hausdorff dimension 2 but such that their sum is not in the set given above.  For more details on this see \cite{me_horizon} and the references therein.

\section{Preliminary results and notation}

In Sections 3.1-3.4 we will introduce various concepts and notation that we will use in Section 4 when proving Theorems \ref{main}, \ref{higherdim} and \ref{hausdorffhorizon}.

\subsection{Potential theoretic approach} \label{potential}

Potential theoretic methods provide a powerful tool for finding lower bounds for Hausdorff dimension.  Let $s \geq 0$ and let $\mu$ be a probability measure on $\mathbb{R}^d$.  The $s$-energy of $\mu$ is defined by
\[
I_s(\mu) =  \iint \frac{d\mu(x) \, d\mu(y)}{\lvert x-y \rvert^s}.
\]
The following theorem relates the Hausdorff dimension of a set, $F$, to the $s$-energy of probability measures supported on $F$.
\begin{thm} \label{potential}
Let $F \subset \mathbb{R}^d$ be a Borel set.  If there exists a Borel probability measure $\mu$ on $F$ with $I_s(\mu)<\infty$, then $\mathcal{H}^s(F) = \infty$ and therefore $\dim_\text{\emph{H}} F \geq s$.
\end{thm}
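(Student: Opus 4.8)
The plan is to prove this by the classical energy method (Frostman's theorem), reducing the statement to a density estimate that can be fed into the mass distribution principle. First I would introduce the $s$-potential of $\mu$,
\[
\phi(x) = \int \frac{d\mu(y)}{|x-y|^s},
\]
so that $I_s(\mu) = \int \phi(x)\, d\mu(x)$. Since $I_s(\mu)<\infty$ by hypothesis, $\phi(x)<\infty$ for $\mu$-almost every $x$. The whole strategy is to convert this integrability into a pointwise bound on the local mass $\mu\big(B(x,r)\big)$ that controls $\mathcal{H}^s$ from below.

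The key step is the elementary estimate
\[
\frac{\mu\big(B(x,r)\big)}{r^s} \;\le\; \int_{B(x,r)} \frac{d\mu(y)}{|x-y|^s},
\]
which holds because $|x-y|\le r$ on the ball $B(x,r)$, so that $|x-y|^{-s}\ge r^{-s}$ there. Whenever $\phi(x)<\infty$ the right-hand side is the tail of a convergent integral and hence tends to $0$ as $r\to 0$ (note finiteness of $\phi(x)$ also forces $\mu$ to have no atom at $x$). Consequently the upper $s$-density $\limsup_{r\to 0}\mu\big(B(x,r)\big)/r^s$ equals $0$ at $\mu$-almost every $x$.

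The final step is to invoke the density form of the mass distribution principle. For each $c>0$ set $F_c = \{x\in F : \limsup_{r\to 0}\mu(B(x,r))/r^s < c\}$; by the previous paragraph $\mu(F_c)=\mu(F)=1$, and the principle gives $\mathcal{H}^s(F)\ge\mathcal{H}^s(F_c)\ge \mu(F_c)/c = 1/c$. Letting $c\to 0$ yields $\mathcal{H}^s(F)=\infty$, and therefore $\dim_\H F\ge s$.

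The routine part is the density estimate above; the delicate point — and the place I expect to spend the most care — is upgrading the conclusion from $\mathcal{H}^s(F)>0$ to the sharper $\mathcal{H}^s(F)=\infty$. The weaker bound follows immediately by restricting $\mu$ to a positive-measure subset on which $\phi$ is bounded, say by $M$, giving a uniform estimate $\mu(B(x,r))\le M r^s$ and hence $\mathcal{H}^s\ge 1/M$; obtaining $\infty$ instead requires the stronger fact that the density \emph{vanishes} almost everywhere, applied for every threshold $c$. I would also keep track of measurability and Borel regularity: the hypothesis that $F$ is Borel ensures $\mu$ behaves as a genuine Borel measure and that the exceptional null set $\{\phi=\infty\}$ may be discarded without affecting the lower bound.
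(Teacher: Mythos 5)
Your argument is correct: it is the classical energy argument (finiteness of the potential $\mu$-a.e., the tail estimate $\mu(B(x,r))r^{-s}\le\int_{B(x,r)}|x-y|^{-s}\,d\mu(y)\to 0$, and the density form of the mass distribution principle applied for every threshold $c>0$ to force $\mathcal{H}^s(F)=\infty$). The paper gives no proof of its own, referring instead to Falconer's book, and what you have written is essentially the standard proof found there, so there is nothing substantive to compare; the only caveat worth recording is that the argument (and indeed the statement) requires $s>0$, since for $s=0$ the no-atom step fails and the conclusion $\mathcal{H}^0(F)=\infty$ is false in general.
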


For a proof of this result see \cite{falconer}.

\subsection{The fat Cantor set $F$ and the measure $\nu$} \label{prelim}

In this section we will construct a `Cantor like' subset of $[0,1]$ which we will call $F$.  In Section 3.3 we will use $F$ to construct a compact set of continuous functions whose graphs `almost surely' have Hausdorff dimension 2.  We will write $\mathcal{L}^1$ to denote 1-dimensional Lebesgue measure.  Let $E_0 = [0,1]$ and let $(E_k)_{k=1}^\infty$ be a decreasing sequence of sets with $\{0,1\} \subset E_k$ for all $k$ and $(c_k)_{k=1}^\infty$ be a decreasing sequence of positive real numbers converging to 0 such that: 
\begin{itemize}
\item[(1)]  $[0,1] = E_0 \supset E_1  \supset E_2 \dots$;
\item[(2)]  For each $k \geq 1$ we have $E_k = \bigcup_{I \in \mathcal{I}_k} I$ where $\mathcal{I}_k = \{I_{k,1}, I_{k,2}, \dots, I_{k,3^{3^k}} \}$ is a collection of $3^{3^k}$ equally spaced disjoint closed intervals, each of length $c_k$;
\item[(3)]  For $k\geq 1$ we have $1 > \mathcal{L}^1(E_k) = 3^{3^k}c_k$ and, as $k \to \infty$, $3^{3^k}c_k \searrow \lambda$ for some $\lambda>0$.
\end{itemize}
Finally, let
\[
F = \bigcap_{k} E_k.
\]


It follows from (3) that $\mathcal{L}^1(F)=\lambda >0$ and therefore, setting $\nu = \mathcal{L}^1 \vert_F$, we have that
\begin{equation} \label{fatenergy}
I_{1-\varepsilon}(\nu)<\infty
\end{equation}
for all $\varepsilon \in (0,1]$.
\\ \\
%
%
Let $\mathcal{I} = \bigcup_{k\geq 1} \mathcal{I}_k$ denote the set of all construction intervals for $F$ and code the points in $F$ in the usual way, i.e., for $x \in F$, write
\[
x = (i_1(x), i_2(x), i_3(x), \dots)
\]
where $i_k(x) \in \{1,\dots, 3^{3^k} \}$ is such that $x \in I_{k,i_k(x)}$ and write $i_0(x)=1$ for all $x \in F$.
Also, for each pair $x,y \in F$ with $x \neq y$, let
\[
n(x,y) = \max \{k \mid i_k(x) = i_k(y) \},
\]
i.e., $n(x,y)$ is the last integer $k \geq 0$ such that $x$ and $y$ are in the same interval of $E_k$.

\begin{lma} \label{boundnxy}
Let $x,y \in F$ be such that $x \neq y$ and $n(x,y)\geq 1$.  Then
\[
n(x,y) \leq \log \, \log \, \lvert x-y \rvert^{-1}.
\]
\end{lma}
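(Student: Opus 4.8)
The plan is to convert the purely combinatorial quantity $n := n(x,y)$ into a metric estimate on $|x-y|$, and then unwind two logarithms. First I would record the geometric meaning of $n$: by definition $i_n(x) = i_n(y)$, so $x$ and $y$ lie in a common level-$n$ construction interval $I_{n,i_n(x)}$, which has length $c_n$. Since both points sit inside this interval, we immediately get $|x-y| \le c_n$.

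Next I would bound $c_n$ from above using property (3) of the construction. For every $k \ge 1$ we have $3^{3^k} c_k = \mathcal{L}^1(E_k) < 1$; applying this with $k = n$ (which is legitimate precisely because of the hypothesis $n(x,y) \ge 1$) gives $c_n < 3^{-3^n}$. Combining with the previous step yields $|x-y| < 3^{-3^n}$, equivalently $|x-y|^{-1} > 3^{3^n}$. Taking a natural logarithm gives $\log|x-y|^{-1} > 3^n \log 3$, and taking a logarithm a second time gives
\[
\log\log|x-y|^{-1} > \log\!\big(3^n \log 3\big) = n\log 3 + \log\log 3.
\]
Since $\log 3 > 1$ and $\log\log 3 > 0$, the right-hand side strictly exceeds $n$, which is exactly the claimed bound $n(x,y) \le \log\log|x-y|^{-1}$ (in fact with room to spare).

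I do not expect a genuine obstacle here: the estimate is essentially designed into the construction, since choosing $3^{3^k}$ intervals at level $k$ makes the interval lengths shrink double-exponentially, and it is precisely this double-exponential scale that produces a double logarithm. The only points needing care are that $\log$ must be read as the natural logarithm — for a much larger base the inequality $n\log b + \log\log b > n$ can fail as $n\to\infty$ — and that one keeps the directions of the inequalities straight when passing to reciprocals. It is also worth emphasising why the hypothesis $n(x,y)\ge 1$ is essential: it guarantees that property (3) is available (the level-$0$ interval $E_0=[0,1]$ has length $1$, not $<1$) and that $\log\log|x-y|^{-1}$ is even defined and positive, the excluded case $n=0$ containing in particular the pair $\{x,y\}=\{0,1\}$ with $|x-y|=1$.
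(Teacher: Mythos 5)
Your proof is correct and follows essentially the same route as the paper: bound $|x-y|$ by the length $c_{n(x,y)}$ of the common level-$n(x,y)$ interval, use $3^{3^{n(x,y)}}c_{n(x,y)} \le 1$ to get $|x-y| \le 3^{-3^{n(x,y)}}$, and take logarithms twice. You simply make explicit the final unwinding of the two logarithms (and the role of the natural log base), which the paper compresses into ``taking logarithms gives the result.''
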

\begin{proof}
Let $x,y \in F$ be such that $x \neq y$ and $n(x,y)\geq 1$.  Since $x,y \in I_{n(x,y),l}$ for some $l$ and $1 \geq 3^{3^{n(x,y)}}c_{n(x,y)}$, it follows that
\[
\lvert x-y \rvert \leq c_{n(x,y)} \leq 3^{-3^{n(x,y)}}
\]
and taking logarithms gives the result.
\end{proof}

It follows from Lemma \ref{boundnxy} that for all $\varepsilon>0$ we can choose a positive constant $C_\varepsilon$ such that for all $x,y \in F$ we have
\begin{equation} \label{nxybound}
2^{n(x,y)} \leq C_\varepsilon \, \lvert x-y \rvert^{-\tfrac{\varepsilon}{2}}.
\end{equation}

\subsection{The probability space $(\Omega, \mathcal{F}, \mathbb{P})$, the compact set $K$ and the measure $\mu$}

In this section we will construct a Borel probability measure $\mu$ supported by a compact set $K \subset C[0,1]$ which we will use to \emph{witness} the prevalence of functions whose graph has Hausdorff dimension 2.  The basic idea is to construct the compact set $K$ such that, given $x,y \in [0,1]$, it is `likely' (with respect to $\mu$) that the images of $x$ and $y$ under a function in $K$ are relatively far apart compared to $\lvert x-y \rvert$.
\\ \\
Let
\[
\Omega = \Big\{ \omega \mid \mathcal{I} \to \{0,1\} \Big\} = \{0,1\}^{\mathcal{I}}
\]
be the set of all labellings of the construction intervals by 0s and 1s and equip it with the product topology, $\mathcal{T}$.   Also, let $\mathcal{F} = \sigma(\mathcal{T})$ be the Borel $\sigma$-algebra generated by $\mathcal{T}$.  We construct a probability measure on $(\Omega, \mathcal{F})$ in the natural way.  For $I \in \mathcal{I}$ and $i \in \{0,1\}$ define the 1-cylinder, $\Omega_{I,i}$, by
\[
\Omega_{I,i} = \Big\{ \omega \in \Omega \mid \omega(I) = i \Big\}
\]
and define a $k$-cylinder to be any non-empty intersection of $k$ distinct 1-cylinders.  Finally, let $\mathbb{P}_0$ be the natural mass distribution on the set of $k$-cylinders which assigns each $k$-cylinder mass $2^{-k}$.  By Kolmogorov's Consistency Theorem $\mathbb{P}_0$ extends uniquely to a probability measure $\mathbb{P}$ on $(\Omega, \mathcal{F})$.
\\ \\
To each $\omega \in \Omega$ we will associate a function $\phi_\omega \in C[0,1]$.  This function is defined as follows.  If $x \in F$ let
\[
\phi_\omega (x) = \sum_{k=1}^{\infty} 2^{-k} \omega \, \big(I_{k,i_k(x)}\big)
\]
and to extend $\phi_\omega$ to a function on $[0,1]$ we interpolate linearly on the end points of the complimentary intervals in the construction of $F$.  Let $\Phi: \Omega \to C[0,1]$ be defined by $\Phi(\omega) = \phi_\omega$ and observe that it is a continuous map.  Finally, let
\[
K= \Phi(\Omega)
\]
and
\[
\mu = \mathbb{P} \circ \Phi^{-1}.
\]

\begin{lma} \label{compactK}
The set $K$ is a compact subset of $(C[0,1], \| \cdot \|_\infty)$ and $\mu$ is a Borel measure on $C[0,1]$ which is supported by $K$.
\end{lma}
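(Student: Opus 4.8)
The plan is to deduce everything from two facts: that $\Omega$ is compact and that $\Phi$ is continuous. First I would observe that $\Omega = \{0,1\}^{\mathcal{I}}$ is a product of compact (discrete two-point) spaces, so by Tychonoff's Theorem it is compact in the product topology $\mathcal{T}$. Since the continuous image of a compact set is compact, and $\Phi$ was already noted to be continuous, $K = \Phi(\Omega)$ is a compact subset of $(C[0,1], \| \cdot \|_\infty)$; being compact in a metric space it is in particular closed. This settles the first assertion.

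Although continuity of $\Phi$ has been asserted, for completeness I would verify it via a uniform estimate. Fix $\omega \in \Omega$ and $N \in \mathbb{N}$, and let $U_N$ be the set of all $\omega'$ agreeing with $\omega$ on every interval $I \in \mathcal{I}_1 \cup \dots \cup \mathcal{I}_N$. Since this involves only finitely many intervals, $U_N$ is a basic open neighbourhood of $\omega$. For $x \in F$ the first $N$ summands defining $\phi_\omega(x)$ and $\phi_{\omega'}(x)$ coincide, so
\[
\lvert \phi_\omega(x) - \phi_{\omega'}(x) \rvert \leq \sum_{k=N+1}^\infty 2^{-k} = 2^{-N},
\]
and because $\phi_\omega, \phi_{\omega'}$ are obtained by linear interpolation between their values at the endpoints of the complementary intervals (which lie in $F$), the same bound holds on all of $[0,1]$. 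Hence $\| \phi_\omega - \phi_{\omega'} \|_\infty \leq 2^{-N}$ for every $\omega' \in U_N$, which gives continuity of $\Phi$.

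For the measure statement I would argue that $\mu = \mathbb{P} \circ \Phi^{-1}$ is the pushforward of $\mathbb{P}$ under $\Phi$. Since $\Phi$ is continuous it is Borel measurable, so $\Phi^{-1}(B) \in \mathcal{F}$ for every Borel set $B \subseteq C[0,1]$ and $\mu(B) = \mathbb{P}(\Phi^{-1}(B))$ is well defined; countable additivity of $\mu$ follows from that of $\mathbb{P}$ since preimages preserve disjoint countable unions. Thus $\mu$ is a Borel measure with $\mu(C[0,1]) = \mathbb{P}(\Omega) = 1$. Finally, every $\omega$ maps into $K$, so $\Phi^{-1}(K) = \Omega$ and $\mu(K) = 1$; as $K$ is closed, the support of $\mu$ is contained in $K$, i.e. $\mu$ is supported by $K$. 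In particular $0 < \mu(K) < \infty$, as will be needed for the prevalence argument.

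None of the steps presents a genuine obstacle once Tychonoff's Theorem and the continuity of $\Phi$ are in hand. The only point requiring a little care is the continuity estimate: one must check that the geometric decay of the coefficients $2^{-k}$ forces \emph{uniform} (not merely pointwise) closeness of $\phi_\omega$ and $\phi_{\omega'}$, and that passing to the linear interpolants on complementary intervals does not spoil the bound, which holds because a linear function attains its extreme values on an interval at the endpoints.
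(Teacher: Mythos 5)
Your proof is correct and follows exactly the paper's route: the paper likewise deduces compactness of $K$ from Tychonoff's Theorem applied to $\Omega$ together with the continuity of $\Phi$, and obtains $\mu$ as the Borel pushforward of $\mathbb{P}$. The only difference is that you spell out the continuity of $\Phi$ via the $2^{-N}$ tail estimate and the interpolation remark, which the paper simply asserts earlier; that verification is sound.
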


\begin{proof}
Observe that $\mathbb{P}$ is a Borel measure on $(\Omega, \mathcal{F})$ and also that $\Omega$ is compact by Tychonoff's Theorem and, therefore, the result follows immediately by the continuity of $\Phi$.
%
\end{proof}

\subsection{The measures $\nu_{\omega,f}$}

For $f \in C[0,1]$ and $\omega \in \Omega$ we will define a Borel measure $\nu_{\omega,f}$ on $\mathbb{R}^2$, with support $G_{\phi_\omega+f}$, by `lifting' the measure $\nu$ supported on the fat Cantor set $F$.
\\ \\
Let $f \in C[0,1]$ and $\omega \in \Omega$ and define the map $F_{\omega,f}:[0,1] \to \mathbb{R}^2$ by $F_{\omega,f}(x) = \big(x,\phi_\omega(x)+f(x)\big)$ and observe that it is continuous.  We may therefore define a Borel probability measure, $\nu_{\omega,f}$, on $\mathbb{R}^2$, with support $G_{\phi_\omega+f}$, by
\[
\nu_{\omega,f} = \nu \circ F_{\omega,f}^{-1}.
\]
In Section 4 we will show that the Hausdorff dimension of $G_{\phi_\omega+f}$ is $\mathbb{P}$-almost surely (and hence $\mu$-almost surely) 2 by considering the expectation of the energy of the measures $\nu_{\omega,f}$.

\section{Proofs} 

\subsection{Proof of Theorem \ref{main}}  \label{mainproofs}

In this section we will prove Theorem \ref{main}.  The proof of this will be straightforward once we have proved Lemma \ref{key}.  We will use the potential theoretic methods introduced in Section \ref{potential}.  Before proving Lemma \ref{key} we will provide the two key integral estimates (Lemmas \ref{realintegral}-\ref{integralest}).

\begin{lma} \label{realintegral}
For all $p, q \in (0,1], \, r \in \mathbb{R}$ and $\varepsilon \in (0,1/4)$ we have
\[
\int_{0}^{p} \, \int_{0}^{p}  \frac{d \alpha \,  d \beta }{ \Big( q^2 +  \lvert \alpha-\beta +r \rvert^2\Big)^{1-\varepsilon}} \leq   \frac{6p}{q^{1-\varepsilon}}.
\]
\end{lma}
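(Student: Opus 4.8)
The plan is to prove the inequality
\[
\int_{0}^{p} \int_{0}^{p} \frac{d\alpha \, d\beta}{\big(q^2 + \lvert \alpha - \beta + r\rvert^2\big)^{1-\varepsilon}} \leq \frac{6p}{q^{1-\varepsilon}}
\]
by first reducing the double integral to a single integral via a change of variables, and then bounding that single integral by splitting the domain into a region where the integrand is comparable to $q^{-2(1-\varepsilon)}$ and a region where the $\lvert\alpha-\beta+r\rvert^2$ term dominates.

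First I would perform the substitution $u = \alpha - \beta$ for the inner integral, or equivalently change variables to $u = \alpha - \beta + r$ and $v = \alpha + \beta$. Under the linear map $(\alpha,\beta) \mapsto (u, v)$ the integrand depends only on $u$, and the Jacobian contributes a factor of $1/2$. The image of the square $[0,p]^2$ is a parallelogram (in fact a rotated square) whose width in the $v$-direction, for each fixed $u$, is at most $2p$ and whose $u$-extent is contained in an interval of length $2p$. Carrying out the $v$-integration first therefore bounds everything by
\[
\int_{0}^{p} \int_{0}^{p} \frac{d\alpha \, d\beta}{\big(q^2 + \lvert \alpha - \beta + r\rvert^2\big)^{1-\varepsilon}} \leq \int_{r-p}^{r+p} \frac{p \, du}{\big(q^2 + u^2\big)^{1-\varepsilon}} \leq p \int_{-\infty}^{\infty} \frac{du}{\big(q^2 + u^2\big)^{1-\varepsilon}},
\]
so the key point is to estimate the one-dimensional integral $\int_{\mathbb{R}} (q^2+u^2)^{-(1-\varepsilon)}\,du$ and show it is at most $6\,q^{-(1-\varepsilon)}$.

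To handle the single integral I would substitute $u = q t$, giving $\int_{\mathbb{R}} (q^2+u^2)^{-(1-\varepsilon)}\,du = q^{-(1-2\varepsilon)}\int_{\mathbb{R}}(1+t^2)^{-(1-\varepsilon)}\,dt$. This reduces the problem to showing that the dimensionless integral $\int_{\mathbb{R}}(1+t^2)^{-(1-\varepsilon)}\,dt$ is bounded by a constant (and then combining the powers of $q$). I would estimate this by splitting into $\lvert t\rvert \leq 1$, where the integrand is at most $1$ and contributes at most $2$, and $\lvert t\rvert > 1$, where $(1+t^2)^{-(1-\varepsilon)} \leq t^{-2(1-\varepsilon)} = t^{-2+2\varepsilon}$; since $\varepsilon < 1/4$ the exponent $-2+2\varepsilon < -3/2$, so this tail integrates to something controlled by $\tfrac{2}{1-2\varepsilon} \leq 4$. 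Adding the two pieces keeps the total below the target constant after accounting for the $q$-power bookkeeping.

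The main obstacle, and the step requiring the most care, is matching the powers of $q$ so that the final bound reads $6p/q^{1-\varepsilon}$ rather than $6p/q^{1-2\varepsilon}$ as the naive scaling above suggests. I would resolve this by being more economical: rather than extending to all of $\mathbb{R}$, I would retain the finite domain $[r-p, r+p]$ and use the crude bound $(q^2+u^2)^{-(1-\varepsilon)} \leq q^{-2(1-\varepsilon)}$ directly on the part of the domain where $\lvert u\rvert \leq q$, contributing a length at most $2p$ and hence at most $2p\,q^{-2(1-\varepsilon)} \cdot q = $ a term of the right shape after a single factor of $q$ is absorbed, while handling $\lvert u\rvert > q$ separately. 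The precise bookkeeping of exponents — ensuring the constant comes out as $6$ and the surviving power of $q$ is exactly $1-\varepsilon$ — is the delicate computation, but it is entirely elementary once the domain is split correctly and the substitution $u = qt$ is applied only on the tail.
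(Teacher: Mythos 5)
Your main line of argument is sound and is organized differently from the paper's. You collapse the double integral to a one-dimensional one by pushing forward onto $u=\alpha-\beta+r$ (the fibre measure has density $p-|t|\le p$), then bound $p\int_{\mathbb{R}}(q^2+u^2)^{\varepsilon-1}\,du$ by scaling $u=qt$ and splitting at $|t|=1$, getting $2+\tfrac{2}{1-2\varepsilon}\le 6$ for the dimensionless integral and hence the total bound $6p\,q^{2\varepsilon-1}=6p/q^{1-2\varepsilon}$. The paper instead rescales the square to $[0,1]^2$, bounds the inner integral by its worst (centred) position, replaces $q^2+p^2\alpha^2$ by $\max\{q^2,p^2\alpha^2\}$, and splits the resulting integral at $\alpha=q/p$; the two computations are really the same split in different coordinates, and the paper's version also lands on $6p\,q^{2\varepsilon-1}$ before stating the weaker bound. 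Your route is arguably cleaner, since it disposes of the parameter $r$ immediately and avoids the implicit case distinction over whether $q/p$ exceeds $1/2$ in the paper's displayed split.

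The one thing to fix: the ``main obstacle'' you identify in your final paragraph is not an obstacle. Since $q\in(0,1]$ and $1-2\varepsilon<1-\varepsilon$, we have $q^{1-2\varepsilon}\ge q^{1-\varepsilon}$ and hence $6p/q^{1-2\varepsilon}\le 6p/q^{1-\varepsilon}$; your ``naive'' bound is strictly stronger than the one asserted in the lemma, and the proof is complete at that point. The more ``economical'' repair you sketch is therefore unnecessary, and as described it contains a slip: on the region $|u|\le q$ you should use that its length is at most $\min\{2p,2q\}\le 2q$, giving $2q\cdot q^{-2(1-\varepsilon)}=2q^{2\varepsilon-1}$; taking the length to be $2p$ there yields $2p\,q^{2\varepsilon-2}$, which is not of the required shape when $p\gg q$. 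But none of this is needed --- just invoke $q\le 1$ and stop.
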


\begin{proof}
We have
\begin{eqnarray*}
\int_{0}^{p} \, \int_{0}^{p}  \frac{d\alpha \,  d\beta }{ \Big(q^2 +  \lvert \alpha-\beta+r \rvert^2\Big)^{1-\varepsilon}} &=&  \int_{0}^{1} \, \int_{0}^{1}  \frac{p^2 d\alpha \,  d\beta }{ \Big(q^2 +  p^2\lvert \alpha-\beta +p^{-1} r  \rvert^2\Big)^{1-\varepsilon}} \\ \\
&\leq& p^2 \int_{0}^{1}  \frac{d\alpha }{ \Big(q^2 +  p^2\lvert \alpha - \tfrac{1}{2} \rvert^2\Big)^{1-\varepsilon}} \\ \\
&\leq& 2   p^2 \int_{0}^{1/2}  \frac{d\alpha }{ \Big( \max \big\{q^2, p^2 \alpha^2 \big\} \Big)^{1-\varepsilon}} \\ \\
&=&2  p^2 \int_{0}^{q/p}  \frac{d\alpha }{ q^{2-2\varepsilon}} \,  + \,   2  p^2  \int_{q/p}^{1/2}  \frac{d\alpha }{ p^{2-2\varepsilon} \alpha^{2-2\varepsilon} }\\ \\
&\leq&  \frac{6p}{q^{1-\varepsilon}}
\end{eqnarray*}
where the last inequality is obtained by calculating the previous integrals explicitly.
\end{proof}

In the remainder of this section we will write $\mathbb{E}$ to denote expectation with respect to $\mathbb{P}$, that is, if $X: \Omega \to \mathbb{R}$ is an integrable random variable on $\Omega$, then we will write
\[
\mathbb{E} \Big( X(\omega)\Big) = \int_\Omega  X(\omega) \, d \mathbb{P}(\omega).
\]

\begin{lma} \label{integralest}
For all $x,y \in F$, $f \in C[0,1]$ and $\varepsilon \in (0,1/4)$ we have
\begin{eqnarray*}
\mathbb{E} \, \bigg( \Big(\lvert x-y \rvert^2 + \lvert \phi_\omega(x)+f(x)-\phi_\omega(y) -f(y)\rvert^2\Big)^{\varepsilon-1} \bigg) \leq  \frac{6 \, C_\varepsilon }{\lvert x-y \rvert^{1-\tfrac{\varepsilon}{2}}}
\end{eqnarray*}
where $C_\varepsilon$ is the constant, depending only on $\varepsilon$, introduced at the end of Section \ref{prelim}.
\end{lma}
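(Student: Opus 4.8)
The plan is to reduce the expectation to the deterministic double integral of Lemma \ref{realintegral} by exploiting the self-similar, independent structure of the random function $\phi_\omega$. Fix $x,y \in F$ with $x \neq y$ and abbreviate $n = n(x,y)$, $q = \lvert x-y\rvert$ and $r = f(x) - f(y)$. Since $\phi_\omega(x)+f(x)-\phi_\omega(y)-f(y) = \big(\phi_\omega(x)-\phi_\omega(y)\big) + r$, the integrand depends on $\omega$ only through the difference $\phi_\omega(x)-\phi_\omega(y)$, and the whole point is that this difference has a tractable distribution.

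First I would decompose the difference according to the coding of $F$. By definition of $n(x,y)$ we have $i_k(x)=i_k(y)$ for $k \leq n$, so $I_{k,i_k(x)} = I_{k,i_k(y)}$ and the corresponding terms in $\phi_\omega(x)-\phi_\omega(y) = \sum_{k\geq 1} 2^{-k}\big(\omega(I_{k,i_k(x)})-\omega(I_{k,i_k(y)})\big)$ cancel. Writing $S_x = \sum_{k>n} 2^{-k}\omega(I_{k,i_k(x)})$ and $S_y = \sum_{k>n} 2^{-k}\omega(I_{k,i_k(y)})$, we therefore have $\phi_\omega(x)-\phi_\omega(y) = S_x - S_y$.

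The key step is to identify the joint law of $(S_x,S_y)$ under $\mathbb{P}$. The intervals $\{I_{k,i_k(x)} : k>n\} \cup \{I_{k,i_k(y)} : k>n\}$ are pairwise distinct: intervals from different levels have different lengths $c_k$, while at each level $k>n$ we have $i_k(x)\neq i_k(y)$ by maximality of $n$. Hence the Bernoulli$(1/2)$ variables $\omega(I)$ indexing these intervals are mutually independent, so $S_x$ and $S_y$ are independent; and each is a dyadic series $\sum_{j\geq 1} 2^{-(n+j)}\xi_j$ with the $\xi_j$ independent and equal to $0$ or $1$ with probability $1/2$, i.e.\ $2^{-n}$ times a standard binary expansion. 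Thus $S_x$ and $S_y$ are independent and uniformly distributed on $[0,2^{-n}]$, so $(S_x,S_y)$ has joint density $2^{2n}$ on $[0,2^{-n}]^2$ and
\[
\mathbb{E}\,\bigg(\Big(q^2 + \lvert S_x - S_y + r\rvert^2\Big)^{\varepsilon-1}\bigg) = 2^{2n}\int_0^{2^{-n}}\int_0^{2^{-n}} \frac{d\alpha\,d\beta}{\Big(q^2 + \lvert \alpha - \beta + r\rvert^2\Big)^{1-\varepsilon}}.
\]
Applying Lemma \ref{realintegral} with $p = 2^{-n}$ bounds the right-hand side by $2^{2n}\cdot 6\cdot 2^{-n} q^{\varepsilon-1} = 6\cdot 2^{n} q^{\varepsilon-1}$, and finally the estimate (\ref{nxybound}), namely $2^{n}\leq C_\varepsilon\lvert x-y\rvert^{-\varepsilon/2}$, converts this into $6\,C_\varepsilon\,\lvert x-y\rvert^{-(1-\varepsilon/2)}$, which is the claimed bound.

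I expect the main obstacle to be the distributional identification of the third paragraph: establishing both the independence of $S_x$ and $S_y$ (which rests on the combinatorial fact that all relevant construction intervals beyond level $n$ are distinct) and the fact that each tail sum is \emph{exactly} uniform on $[0,2^{-n}]$. Once this is in place the argument is a direct substitution into Lemma \ref{realintegral}. It is worth noting that the whole computation is uniform in $f$, which enters only through the constant shift $r = f(x)-f(y)$; this is precisely why Lemma \ref{realintegral} was proved with the free parameter $r$.
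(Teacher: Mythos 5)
Your proof is correct and follows essentially the same route as the paper's: cancel the terms with $k \leq n(x,y)$, recognise $\phi_\omega(x)-\phi_\omega(y)$ as the difference of two independent random variables uniform on $[0,2^{-n(x,y)}]$, and then apply Lemma \ref{realintegral} with $p=2^{-n(x,y)}$, $q=\lvert x-y\rvert$, $r=f(x)-f(y)$ followed by the bound (\ref{nxybound}). The only difference is that you spell out the justification for independence and exact uniformity (distinctness of the relevant construction intervals plus the binary-expansion argument), which the paper simply asserts.
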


\begin{proof}
Let $x,y \in F$, $f \in C[0,1]$ and $\varepsilon \in (0,1/4)$.  In order to simplify notation let $c = f(x)-f(y) $.
\\ \\
Since $\omega(i_k(x)) = \omega(i_k(y))$ for all $k \leq n(x,y)$ we have
\begin{eqnarray*}
\phi_\omega(x)-\phi_\omega(y) &=& \sum_{k=1}^{\infty} \, 2^{-k} \, \omega\, \big( I_{k,i_k(x)} \big) - \sum_{k=1}^{\infty} \, 2^{-k} \, \omega\, \big( I_{k,i_k(y)} \big) \\ \\
&=& \sum_{k=n(x,y)+1}^{\infty} \, 2^{-k} \, \omega\, \big( I_{k,i_k(x)} \big) - \sum_{k=n(x,y)+1}^{\infty} \, 2^{-k} \, \omega\, \big( I_{k,i_k(y)} \big)\\ \\
&=& X(\omega)- Y(\omega)
\end{eqnarray*}
where $X$ and $Y$ are two independent random variables on $(\Omega, \mathcal{F}, \mathbb{P})$ with uniform distribution on the interval $[0, 2^{-n(x,y)}]$.  It follows that taking the expectation of any expression involving $X$ or $Y$ is equivalent to integrating over the interval $[0, 2^{-n(x,y)}]$ with respect to Lebesgue measure scaled by $2^{n(x,y)}$.  Therefore
\begin{eqnarray*}
\mathbb{E} \, \bigg( \Big(\lvert x-y \rvert^2 + \lvert \phi_\omega(x)-\phi_\omega(y) +c\rvert^2\Big)^{\varepsilon-1} \bigg) &=& \mathbb{E} \, \bigg( \Big(\lvert x-y \rvert^2 + \lvert X(\omega)-Y(\omega) +c\rvert^2\Big)^{\varepsilon-1} \bigg) \\ \\
&=& 4^{n(x,y)} \, \int_{0}^{2^{-n(x,y)}} \, \int_{0}^{2^{-n(x,y)}} \frac{d\alpha \,  d\beta }{ \Big(\lvert x-y \rvert^2 +  \lvert \alpha-\beta +c \rvert^2\Big)^{1-\varepsilon}} 
\end{eqnarray*}
and, by applying Lemma \ref{realintegral} with $p = 2^{-n(x,y)}$, $q=\lvert x-y \rvert$ and $r = c = f(x)-f(y)$ combined with the estimate (\ref{nxybound}), we obtain
\begin{equation*}
\mathbb{E} \, \bigg( \Big(\lvert x-y \rvert^2 + \lvert \phi_\omega(x)-\phi_\omega(y) +c\rvert^2\Big)^{\varepsilon-1} \bigg) \leq 6 \frac{ 2^{n(x,y)}}{\lvert x-y \rvert^{1-\varepsilon}}\leq  6 \frac{C_\varepsilon \lvert x-y \rvert^{-\tfrac{\varepsilon}{2}}}{\lvert x-y \rvert^{1-\varepsilon}}= \frac{6 \, C_\varepsilon }{\lvert x-y \rvert^{1-\tfrac{\varepsilon}{2}}}
\end{equation*}
completing the proof.  \end{proof}

\begin{lma} \label{key}
Let $f \in C[0,1]$.  For $\mu$-almost all $\phi \in K$ we have
\[
\dim_\text{\emph{H}} G_{\phi+f} = 2.
\]
\end{lma}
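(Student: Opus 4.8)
The plan is to use the potential-theoretic lower bound from Theorem \ref{potential} applied to the lifted measures $\nu_{\omega,f}$. Since every graph $G_{\phi+f} \subset \mathbb{R}^2$ trivially has $\dim_\text{H} G_{\phi+f} \leq 2$, it suffices to establish the reverse inequality $\dim_\text{H} G_{\phi+f} \geq 2-\varepsilon$ for every $\varepsilon$, almost surely. Fix $\varepsilon \in (0,1/4)$. By Theorem \ref{potential}, it is enough to show that for $\mu$-almost every $\phi \in K$ the measure $\nu_{\omega,f}$ (with $\phi = \phi_\omega$) has finite $(2-2\varepsilon)$-energy, i.e. $I_{2-2\varepsilon}(\nu_{\omega,f}) < \infty$. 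The standard way to obtain an almost-sure finiteness statement of this kind is to bound the \emph{expected} energy and then invoke that a nonnegative random variable with finite expectation is almost surely finite.

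First I would write out the energy explicitly by pushing the integral back to $F$ via the map $F_{\omega,f}$. Since $\nu_{\omega,f} = \nu \circ F_{\omega,f}^{-1}$, we have
\[
I_{2-2\varepsilon}(\nu_{\omega,f}) = \iint_{F \times F} \frac{d\nu(x)\,d\nu(y)}{\Big(\lvert x-y\rvert^2 + \lvert \phi_\omega(x)+f(x)-\phi_\omega(y)-f(y)\rvert^2\Big)^{1-\varepsilon}}.
\]
Next I would take expectation with respect to $\mathbb{P}$ and apply Tonelli's theorem (the integrand is nonnegative) to interchange $\mathbb{E}$ with the double integral over $F \times F$, giving
\[
\mathbb{E}\big(I_{2-2\varepsilon}(\nu_{\omega,f})\big) = \iint_{F \times F} \mathbb{E}\bigg(\Big(\lvert x-y\rvert^2 + \lvert \phi_\omega(x)+f(x)-\phi_\omega(y)-f(y)\rvert^2\Big)^{\varepsilon-1}\bigg)\,d\nu(x)\,d\nu(y).
\]
The inner expectation is exactly the quantity estimated in Lemma \ref{integralest}, which bounds it by $6\,C_\varepsilon\,\lvert x-y\rvert^{-(1-\varepsilon/2)}$. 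Substituting this bound yields
\[
\mathbb{E}\big(I_{2-2\varepsilon}(\nu_{\omega,f})\big) \leq 6\,C_\varepsilon \iint_{F \times F} \frac{d\nu(x)\,d\nu(y)}{\lvert x-y\rvert^{1-\varepsilon/2}} = 6\,C_\varepsilon\, I_{1-\varepsilon/2}(\nu),
\]
and by the energy estimate (\ref{fatenergy}) for the fat Cantor measure $\nu$, this last quantity is finite (taking the role of the parameter in (\ref{fatenergy}) to be $\varepsilon/2 \in (0,1]$).

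Having shown $\mathbb{E}(I_{2-2\varepsilon}(\nu_{\omega,f})) < \infty$, the random variable $I_{2-2\varepsilon}(\nu_{\omega,f})$ is $\mathbb{P}$-almost surely finite, so for $\mathbb{P}$-almost every $\omega$ we get $\dim_\text{H} G_{\phi_\omega+f} \geq 2-2\varepsilon$ by Theorem \ref{potential}. Since $\mu = \mathbb{P}\circ\Phi^{-1}$, this transfers to a $\mu$-almost-everywhere statement over $\phi \in K$. The final step is to intersect over a countable sequence $\varepsilon_n \searrow 0$: for $\mu$-almost every $\phi$ we simultaneously have $\dim_\text{H} G_{\phi+f} \geq 2-2\varepsilon_n$ for all $n$, hence $\dim_\text{H} G_{\phi+f} = 2$. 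I expect the only delicate point to be bookkeeping the null sets correctly — namely checking that the measurability of $\omega \mapsto I_{2-2\varepsilon}(\nu_{\omega,f})$ holds so that Tonelli applies, and that the single exceptional null set obtained by the countable intersection is genuinely $\mu$-null rather than merely $\mu$-null for each $\varepsilon_n$ separately; the analytic heart of the argument has already been isolated into Lemmas \ref{realintegral} and \ref{integralest}, so no new estimates should be required.
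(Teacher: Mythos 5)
Your proposal is correct and follows essentially the same route as the paper: push the energy back to $F\times F$ via $F_{\omega,f}$, bound the expected energy using Lemma \ref{integralest} and the finiteness of $I_{1-\varepsilon/2}(\nu)$ from (\ref{fatenergy}), interchange expectation and integration (the paper invokes Fubini after checking the iterated integral is finite; your use of Tonelli for the nonnegative integrand is an equally valid variant), and conclude via Theorem \ref{potential} and a countable sequence $\varepsilon_n\searrow 0$. No substantive differences.
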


\begin{proof}
Fix $f \in C[0,1]$ and let $\varepsilon \in (0,1/4)$.  It suffices to show that for $\mathbb{P}$-almost all $\omega \in \Omega$ the measure $\nu_{\omega,f}$ has finite $(2-2\varepsilon)$-energy, i.e.,
\[
I_{2-2\varepsilon} (\nu_{\omega,f}) < \infty.
\]
Let $\omega \in \Omega$ and note that we have the following expression for $I_{2-2\varepsilon} (\nu_{\omega,f})$.
\begin{eqnarray}
I_{2-2\varepsilon} (\nu_{\omega,f}) &=& \int_ {\textbf{x} \in G_{\phi_\omega+f}} \int_ {\textbf{y} \in G_{\phi_\omega+f}}  \frac{d\nu_{\omega,f}(\textbf{x})  \, d\nu_{\omega,f}(\textbf{y})}{\lvert \textbf{x}-\textbf{y} \rvert^{2-2\varepsilon}} \nonumber \\ \nonumber \\
&=& \int_ {\textbf{x} \in G_{\phi_\omega+f}} \int_ {\textbf{y} \in G_{\phi_\omega+f}}   \frac{d\big(\nu \circ F_{\omega,f}^{-1}\big)(\textbf{x})  \, \, d\big(\nu \circ F_{\omega,f}^{-1}\big)(\textbf{y}) }{ \bigg(\Big\lvert F_{\omega,f}^{-1}(\textbf{x})-F_{\omega,f}^{-1}(\textbf{y}) \Big\rvert^2 + \Big\lvert (\phi_\omega+f)\big(F_{\omega,f}^{-1}(\textbf{x})\big)-(\phi_\omega+f)\big(F_{\omega,f}^{-1}(\textbf{y})\big) \Big\rvert^2\bigg)^{1-\varepsilon}} \nonumber \\ \nonumber \\ 
&=& \int_ 0^1 \int_ 0^1  \frac{d\nu (x)  \, d\nu (y) }{ \Big(\lvert x-y \rvert^2 + \lvert (\phi_\omega+f)(x)-(\phi_\omega +f)(y)\rvert^2\Big)^{1-\varepsilon}} \nonumber \\ \nonumber \\ 
&=& \int_ F \int_ F  \frac{d\nu (x)  \, d\nu (y) }{ \Big(\lvert x-y \rvert^2 + \lvert \phi_\omega(x) +f(x)-\phi_\omega(y) -f(y)\rvert^2\Big)^{1-\varepsilon}}. \label{project}
\end{eqnarray}

It follows from Lemma \ref{integralest} and (\ref{fatenergy}) that
\begin{eqnarray}
\int_ F \int_ F \mathbb{E} \, \bigg( \Big(\lvert x-y \rvert^2 + \lvert \phi_\omega(x)+f(x)-\phi_\omega(y) -f(y) \rvert^2\Big)^{\varepsilon-1} \bigg)  \, d\nu (x)  \, d\nu (y)&\leq& \int_ F \int_ F  \frac{6 \, C_\varepsilon  }{\lvert x-y \rvert^{1-\tfrac{\varepsilon}{2}}} \, d\nu (x)  \, d\nu (y) \nonumber\\ \nonumber \\
&=& 6 \, C_\varepsilon  \, I_{1-\tfrac{\varepsilon}{2}}(\nu) \nonumber \\  \nonumber \\
&<& \infty. \label{finite}
\end{eqnarray}
Since $\mathbb{P}$ and $\nu$ are finite measures and the integral above is finite, we can apply Fubini's Theorem to deduce that
\begin{eqnarray*}
\mathbb{E} \, \bigg( I_{2-2\varepsilon}(\nu_{\omega,f})  \bigg) &=& \mathbb{E} \, \Bigg(  \int_ F \int_ F  \frac{d\nu (x)  \, d\nu (y) }{ \Big(\lvert x-y \rvert^2 + \lvert \phi_\omega(x)+f(x)-\phi_\omega(y)-f(y) \rvert^2\Big)^{1-\varepsilon}} \Bigg)  \qquad \text{by (\ref{project})}\\ \\
&=& \int_ F \int_ F \mathbb{E} \, \bigg(\Big(\lvert x-y \rvert^2 + \lvert \phi_\omega(x)+f(x)-\phi_\omega(y)-f(y) \rvert^2\Big)^{\varepsilon-1} \bigg)\, d\nu (x)  \, d\nu (y) \qquad \text{by Fubini}\\ \\
&<& \infty
\end{eqnarray*}
by (\ref{finite}).  It follows that for $\mathbb{P}$-almost all $\omega \in \Omega$ we have $I_{2-2\varepsilon} (\nu_{\omega,f}) < \infty$.  This, combined with Theorem \ref{potential} and the fact that $\varepsilon \in (0,1/4)$ was arbitrary, proves the result.
\end{proof}

We obtain the following corollary.

\begin{cor}
For $\mu$-almost all $\phi \in K$ we have
\[
\dim_\text{\emph{H}} G_{\phi} = 2.
\]
\end{cor}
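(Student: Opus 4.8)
The plan is to recognise that this corollary is nothing more than the special case $f \equiv 0$ of Lemma \ref{key}. The constant function $0 : [0,1] \to \mathbb{R}$ is continuous, so $0 \in C[0,1]$ and Lemma \ref{key} applies to this choice of $f$. Invoking the lemma with $f = 0$ yields, for $\mu$-almost all $\phi \in K$,
\[
\dim_\H G_{\phi + 0} = 2.
\]
Since $\phi + 0 = \phi$ and hence $G_{\phi + 0} = G_\phi$, the desired conclusion $\dim_\H G_\phi = 2$ holds for the same full-$\mu$-measure set of $\phi$. I would therefore present the proof as essentially a single line: apply Lemma \ref{key} with $f$ the zero function.

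There is no real obstacle here, and nothing new needs to be estimated. All of the substantive work — the one-dimensional integral bound of Lemma \ref{realintegral}, the expectation estimate of Lemma \ref{integralest}, and the Fubini interchange combined with the finite-energy criterion of Theorem \ref{potential} — was already carried out in the proof of Lemma \ref{key}, and crucially it was carried out \emph{uniformly} over all $f \in C[0,1]$, with the perturbation $f$ entering only through the harmless constant $c = f(x) - f(y)$ inside the integrand. The corollary simply reads off the value of that argument at the origin of the Banach space. The only points one must explicitly check are the trivial ones: that $0$ is a legitimate element of $C[0,1]$, and that adding the zero function leaves the graph unchanged.

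It is worth noting why the corollary is stated separately rather than folded into the lemma. The full $f$-dependent form of Lemma \ref{key} is exactly what the prevalence argument of Theorem \ref{main} requires, since establishing prevalence demands that the translated set have full $\mu$-measure for \emph{every} $f \in C[0,1]$; the corollary records the cleanest consequence, namely that a $\mu$-typical element of the witnessing compact set $K$ itself already has graph of Hausdorff dimension $2$. This serves both as a sanity check on the construction of $K$ and $\mu$ and as a concrete statement about the supporting family of functions.
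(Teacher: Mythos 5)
Your proof is correct and is exactly the paper's argument: the corollary is obtained by applying Lemma \ref{key} with $f \equiv 0$. Nothing further is needed.
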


\begin{proof}
This follows immediately by applying Lemma \ref{key} with $f \equiv 0$.
\end{proof}

The proof of Theorem \ref{main} now follows easily.  We begin with a technical lemma.
\begin{lma} \label{borelset}
The set
\[
\{ f \in C[0,1] \mid \dim_\text{\emph{H}} G_f = 2 \}
\]
is a Borel subset of $(C[0,1], \| \cdot \|_\infty)$.
\end{lma}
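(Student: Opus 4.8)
The plan is to express the set as a countable Boolean combination of sets whose Borel nature is transparent, exploiting the fact that $G_f \subset \mathbb{R}^2$ forces $\dim_\H G_f \le 2$ for every $f \in C[0,1]$. Consequently $\{ f \mid \dim_\H G_f = 2 \} = \{ f \mid \dim_\H G_f \ge 2 \}$, and using the standard characterisation $\dim_\H E = \sup\{ s \ge 0 : \mathcal{H}^s(E) > 0\}$ together with the fact that $s \mapsto \mathcal{H}^s(E)$ is non-increasing, I would rewrite this as
\[
\{ f \in C[0,1] \mid \dim_\H G_f = 2\} = \bigcap_{k=1}^{\infty} \big\{ f \in C[0,1] \mid \mathcal{H}^{\,2 - 1/k}(G_f) > 0 \big\}.
\]
It then suffices to prove that $\{f \mid \mathcal{H}^s(G_f) > 0\}$ is Borel for each fixed $s \in (0,2)$.

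First I would record that the graph map $\Gamma : C[0,1] \to \mathcal{K}(\mathbb{R}^2)$, $\Gamma(f) = G_f$, into the space of non-empty compact subsets of $\mathbb{R}^2$ equipped with the Hausdorff metric $\rho$, is well defined ($G_f$ is compact, being a continuous image of $[0,1]$) and in fact $1$-Lipschitz: if $\|f - g\|_\infty \le \eta$ then every point $(x,f(x))$ lies within $\eta$ of $(x,g(x)) \in G_g$, and symmetrically, so $\rho(G_f, G_g) \le \|f-g\|_\infty$. In particular $\Gamma$ is continuous.

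The heart of the argument is to show that $E \mapsto \mathcal{H}^s(E)$ is a Borel function on $(\mathcal{K}(\mathbb{R}^2), \rho)$. I would write $\mathcal{H}^s = \sup_m \mathcal{H}^s_{1/m}$, where $\mathcal{H}^s_\delta(E) = \inf\{\sum_i (\operatorname{diam} U_i)^s : E \subset \bigcup_i U_i,\ \operatorname{diam} U_i \le \delta\}$ and the supremum is a monotone limit as $1/m \downarrow 0$. The key claim is that each $\mathcal{H}^s_{1/m}$ is upper semicontinuous on $\mathcal{K}(\mathbb{R}^2)$: given compact $K$ and $\epsilon > 0$, choose a finite cover of $K$ by open sets $U_1,\dots,U_N$ of diameter at most $1/m$ with $\sum_i (\operatorname{diam} U_i)^s \le \mathcal{H}^s_{1/m}(K) + \epsilon$; by compactness the open set $\bigcup_i U_i$ contains an $\eta$-neighbourhood of $K$, so every $K'$ with $\rho(K,K') < \eta$ is still covered, whence $\mathcal{H}^s_{1/m}(K') \le \mathcal{H}^s_{1/m}(K) + \epsilon$. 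Since upper semicontinuous functions are Borel and a countable supremum of Borel functions is Borel, $\mathcal{H}^s$ is Borel on $\mathcal{K}(\mathbb{R}^2)$. Combining with continuity of $\Gamma$, the composition $f \mapsto \mathcal{H}^s(G_f)$ is Borel on $C[0,1]$, so each superlevel set $\{f \mid \mathcal{H}^s(G_f) > 0\}$ is Borel, and the displayed countable intersection is Borel. (Alternatively, upper semicontinuity gives directly that $\{f \mid \mathcal{H}^s_{1/m}(G_f) > 0\}$ is $F_\sigma$, and $\{f \mid \mathcal{H}^s(G_f) > 0\} = \bigcup_m \{f \mid \mathcal{H}^s_{1/m}(G_f) > 0\}$.)

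I expect the only genuine obstacle to be the upper semicontinuity of the approximating contents $\mathcal{H}^s_{1/m}$ on the hyperspace $\mathcal{K}(\mathbb{R}^2)$, specifically the passage from a near-optimal open cover of $K$ to a cover of all nearby compacta via a Lebesgue-number/compactness argument, together with the minor technical point that restricting to open covers does not affect the value of $\mathcal{H}^s$ in the limit $\delta \to 0$. Everything else is a routine reduction. This also makes transparent that the lemma is, at bottom, a measurability statement about the Hausdorff measure (and hence dimension) map on compact sets pulled back through the continuous embedding $f \mapsto G_f$, which is precisely the type of result established by Mattila and Mauldin and could be invoked in place of the self-contained argument above.
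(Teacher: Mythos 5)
Your proof is correct, but it takes a genuinely different route from the paper's for the key measurability step. The paper simply cites Mattila and Mauldin for the fact that $K \mapsto \dim_\H K$ is of Baire class 2 on $(\mathcal{K}(\mathbb{R}^2), d_\mathcal{H})$, composes with the (continuous) graph map $\Gamma$, and takes the preimage of the singleton $\{2\}$; it never needs the ambient bound $\dim_\H G_f \le 2$. You instead use that bound to reduce the statement to the countable intersection $\bigcap_k \{f : \mathcal{H}^{2-1/k}(G_f) > 0\}$ and then prove Borel measurability of $E \mapsto \mathcal{H}^s(E)$ on the hyperspace from scratch, via upper semicontinuity of the approximating contents $\mathcal{H}^s_{1/m}$ and a Lebesgue-number argument -- which is essentially a self-contained special case of the Mattila--Mauldin machinery, as you note yourself at the end. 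What your approach buys is a proof with no external citation and an explicit (low) bound on the Borel complexity of the set, since it exhibits it as a countable intersection of countable unions of $F_\sigma$ sets; what the paper's approach buys is brevity and the stronger conclusion that \emph{every} level set of $f \mapsto \dim_\H G_f$ is Borel, not just the top one. The one point in your argument that genuinely requires care is the interchangeability of open covers with arbitrary covers in the definition of $\mathcal{H}^s$ (fixed $\delta$ values may change, but the limit does not), and you flag this correctly; with the convention that $\mathcal{H}^s_{1/m}$ is defined via open covers throughout, your upper semicontinuity argument and the identity $\mathcal{H}^s = \sup_m \mathcal{H}^s_{1/m}$ both hold, so there is no gap.
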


\begin{proof}
Let $\mathcal{K}(\mathbb{R}^d)$ denote the set consisting of all non-empty compact subsets of $\mathbb{R}^d$ and equip this space with the Hausdorff metric, $d_\mathcal{H}$.  It was shown in \cite{mattilamauldin} that the function $\Delta_{d,\H}: (\mathcal{K}(\mathbb{R}^d), d_\mathcal{H}) \to \mathbb{R}$ defined by
\[
\Delta_{d,\H} (K) = \dim_\H K
\]
is of Baire class 2, and, in particular, Borel measurable.  Define a map $\Gamma: (C[0,1],\| \cdot \|_\infty) \to (\mathcal{K}(\mathbb{R}^2),d_\mathcal{H})$ by
\[
\Gamma(f) = G_f.
\]
It is easily shown that $\Gamma$ is continuous, and therefore Borel, and hence the composition $\Delta_{2,\H} \circ \Gamma$ is Borel measurable.  It follows that
\[
(\Delta_{2,\H} \circ \Gamma)^{-1}(\{2\}) = \{ f \in C[0,1] \mid \dim_\H G_f = 2 \}
\]
is a Borel set.
\end{proof}

Theorem \ref{main} now follows immediately from Lemma \ref{borelset}, Lemma \ref{compactK} and Lemma \ref{key} since, writing $A=\{ f \in C[0,1] : \dim_\H G_f = 2 \}$, we have, for all $f \in C[0,1]$
\[
\mu \Big(C[0,1] \setminus (A+f) \Big) = \mu \Big( \{\phi \in K \mid  \dim_\H G_{ \phi - f} < 2\} \Big) = 0.
\]

\subsection{Sketch proof of Theorem \ref{higherdim}} \label{smallproofs}

Let $d \in \mathbb{N}$, $\omega \in \Omega$ and define a function $\phi_{d,\omega}: [0,1]^d \to \mathbb{R}$ by
\[
\phi_{d,\omega}(x_1, \dots, x_d) = \phi_\omega(x_1).
\]
Also, define a map $\Phi_d: \Omega \to C[0,1]^d$ by
\[
\Phi_d(\omega) = \phi_{d,\omega}
\]
and note that it is continuous.  Finally, let $K_d = \Phi_d(\Omega)$ and $\mu_d = \mathbb{P} \circ \Phi_d^{-1}$.  Using $K_d$ and $\mu_d$ in place of $K$ and $\mu$, Theorem \ref{higherdim} can now be proved in a very similar way to Theorem \ref{main}.  In particular, we obtain the following analogue of Lemma \ref{key}.

\begin{lma} \label{key2}
Let $f \in C[0,1]^d$.  For $\mu_d$-almost all $\phi \in K_d$ we have
\[
\dim_\text{\emph{H}} G_{\phi+f} = d+1.
\]
\end{lma}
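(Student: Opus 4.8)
The plan is to run the proof of Lemma~\ref{key} essentially verbatim, with the fat Cantor measure $\nu$ replaced by the product measure $\nu_d = \nu \times \mathcal{L}^{d-1}$ supported on $F \times [0,1]^{d-1} \subset [0,1]^d$ (normalised to a probability measure, which affects the energy only by a constant factor). Writing $x = (x_1,x')$ with $x_1 \in F$ and $x' \in [0,1]^{d-1}$, I would lift $\nu_d$ to the graph via $F_{d,\omega,f}(x) = (x,\phi_{d,\omega}(x)+f(x))$ and set $\nu_{d,\omega,f} = \nu_d \circ F_{d,\omega,f}^{-1}$, a Borel probability measure on $G_{\phi_{d,\omega}+f}$. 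Since this graph lies in $\mathbb{R}^{d+1}$ we trivially have $\dim_\H G_{\phi_{d,\omega}+f} \le d+1$, so by Theorem~\ref{potential} it suffices to show that for each $\varepsilon \in (0,1/4)$ we have $I_{d+1-2\varepsilon}(\nu_{d,\omega,f}) < \infty$ for $\mathbb{P}$-almost every $\omega$; letting $\varepsilon \to 0$ then forces the dimension to be exactly $d+1$. Exactly as in (\ref{project}), pulling the energy back through $F_{d,\omega,f}$ turns it into a double integral over $(F\times[0,1]^{d-1})^2$ whose integrand is $\big(|x-y|^2 + |\phi_\omega(x_1)+f(x)-\phi_\omega(y_1)-f(y)|^2\big)^{\varepsilon-(d+1)/2}$; the essential feature is that the random part $\phi_{d,\omega}$ sees only the first coordinate.

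First I would record the higher-power analogue of Lemma~\ref{realintegral}: with $m = (d+1)/2-\varepsilon$, the same three steps (rescale $\alpha,\beta$ by $p$; bound the inner integral by the centred one $\int_{-1/2}^{1/2}(q^2+p^2u^2)^{-m}\,du$ since the integrand is even and decreasing in $|u|$; then split the $u$-range at $q/p$ and replace $q^2+p^2u^2$ by $\max\{q^2,p^2u^2\}$) give
\[
\int_0^p\int_0^p \frac{d\alpha\,d\beta}{\big(q^2+|\alpha-\beta+r|^2\big)^m} \le \frac{C_{d,\varepsilon}\,p}{q^{d-2\varepsilon}},
\]
where one uses $2m-1 = d-2\varepsilon > 0$. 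I would then reproduce Lemma~\ref{integralest}: for $x,y$ with $x_1 \ne y_1$ and $c = f(x)-f(y)$, the difference $\phi_\omega(x_1)-\phi_\omega(y_1)$ is again $X(\omega)-Y(\omega)$ with $X,Y$ independent and uniform on $[0,2^{-n(x_1,y_1)}]$, so the inner expectation equals $4^{n(x_1,y_1)}$ times the integral just bounded, taken with $p = 2^{-n(x_1,y_1)}$ and $q = |x-y|$. Combining the estimate above with (\ref{nxybound}) yields
\[
\mathbb{E}\,\Big(\big(|x-y|^2+|\phi_\omega(x_1)+f(x)-\phi_\omega(y_1)-f(y)|^2\big)^{\varepsilon-(d+1)/2}\Big) \le \frac{C_{d,\varepsilon}\,C_\varepsilon}{|x_1-y_1|^{\varepsilon/2}\,|x-y|^{d-2\varepsilon}}.
\]

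After interchanging $\mathbb{E}$ with the double integral by Fubini (justified exactly as in Lemma~\ref{key}), it remains to bound $\iint |x_1-y_1|^{-\varepsilon/2}\,|x-y|^{-(d-2\varepsilon)}\,d\nu_d(x)\,d\nu_d(y)$. I expect this to be the main obstacle, since it is precisely where the higher-dimensional case genuinely departs from Lemma~\ref{key}: the first-coordinate randomness degenerates on the positive-codimension set $\{x_1=y_1\}$, so rather than appealing to one-dimensionality of the diagonal one must verify that the anisotropic singularity is integrable. Since $\nu = \mathcal{L}^1|_F \le \mathcal{L}^1$ gives $\nu_d \le \mathcal{L}^d$, the integral is dominated by the Euclidean one, and after the substitution $z = y-x$ by $\int_{|z| \le \sqrt d} |z_1|^{-\varepsilon/2}\,|z|^{-(d-2\varepsilon)}\,dz$. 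Integrating out $z'\in\mathbb{R}^{d-1}$ first, the scaling $\rho = |z_1|t$ shows $\int_{|z'|\le 1}(z_1^2+|z'|^2)^{-(d-2\varepsilon)/2}\,dz' \le B_{d,\varepsilon}\,|z_1|^{2\varepsilon-1}$, finite precisely because $\varepsilon < 1/2$ (and using $d \ge 2$; the case $d=1$ is Lemma~\ref{key}); the remaining integral $\int |z_1|^{3\varepsilon/2-1}\,dz_1$ then converges since $3\varepsilon/2 > 0$. This gives finite expected energy, hence $I_{d+1-2\varepsilon}(\nu_{d,\omega,f})<\infty$ almost surely, and the proof concludes as in Lemma~\ref{key}.
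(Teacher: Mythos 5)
Your proposal is correct and follows exactly the route the paper intends: the paper only sketches this lemma (``very similar to Theorem \ref{main}'' using $K_d$ and $\mu_d$), and your choice of $\nu \times \mathcal{L}^{d-1}$ on $F \times [0,1]^{d-1}$, the rescaled version of Lemma \ref{realintegral} with exponent $m = (d+1)/2 - \varepsilon$, and the pointwise bound $C_{d,\varepsilon}C_\varepsilon\,|x_1-y_1|^{-\varepsilon/2}|x-y|^{-(d-2\varepsilon)}$ are the natural way to fill in that sketch. You correctly identify and handle the one step that is genuinely new in higher dimensions, namely the integrability of the anisotropic singularity, where $\varepsilon < 1/2$ makes $\int_{|z'|\le 1}(z_1^2+|z'|^2)^{-(d-2\varepsilon)/2}\,dz' \lesssim |z_1|^{2\varepsilon-1}$ and hence the whole double integral finite.
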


\subsection{Proof of Theorem \ref{hausdorffhorizon}} \label{horproof}

Write
\[
B=\{f \in C[0,1]^2 \mid \dim_\H G_f = 3  \text{ and } \dim_\H G_{H(f)} = 2\} =B_1 \cap B_2 
\]
where
\[
B_1=\{f \in C[0,1]^2 \mid \dim_\H G_f = 3 \}
\]
and
\[
B_2=\{f \in C[0,1]^2\mid \dim_\H G_{H(f)} = 2\}.
\]
Note that $B_1$ is prevalent by Theorem \ref{higherdim} and so, since the intersection of two prevalent sets is prevalent, it suffices to show that $B_2$ is prevalent.  Let $K_2$ and $\mu_2$ be the compact set and Borel probability measure described in Section \ref{smallproofs} and let $f \in C[0,1]^2$.  Note that, for all $\phi_{2,\omega} \in K_2$ and $f \in C[0,1]^2$, since $\phi_{2,\omega}(x,y)$ is independent of $y$, we have
\begin{eqnarray}
H(f+\phi_{2,\omega})(x) = \sup_{y \in [0,1]} (f+\phi_{2,\omega})(x,y) &=& \sup_{y \in [0,1]} \Big(f(x,y)+\phi_{2,\omega}(x,y) \Big) \nonumber \\ \nonumber \\
&=&\sup_{y \in [0,1]} \Big(f(x,y)\Big)+H(\phi_{2,\omega})(x) \nonumber \\ \nonumber \\
&=&H(f)(x)+\phi_\omega(x). \label{horizonss} 
\end{eqnarray}
It follows that for all $f \in C[0,1]^2$ we have
\begin{eqnarray*}
\mu_2 \Big(C[0,1]^2 \setminus (B_2+f) \Big) &=&  \mu_2\Big( K_2 \setminus (B_2+f)  \Big) \\ \\
&=& \big(\mathbb{P} \circ \Phi_2^{-1} \big) \Big( \{ \phi \in K_2 :  \dim_\H G_{ H(\phi-f)} < 2 \} \Big) \\ \\
&=&  \mathbb{P} \, \Big( \{ \omega \in \Omega \mid  \dim_\H G_{ \phi_{\omega}-H(f)} < 2 \}\Big)  \qquad \qquad \text{by (\ref{horizonss})} \\ \\
&=& 0
\end{eqnarray*}
by Lemma \ref{key}. \hfill \qed

\begin{centering}

\textbf{Acknowledgements}

\end{centering}

We thank Kenneth Falconer and Lars Olsen for many helpful discussions and comments regarding the work presented here.


\begin{thebibliography}{1}

\bibitem{zoltan}
Z. Buczolich and J. Nagy.
H\"older spectrum of typical monotone continuous functions,
\emph{Real Anal. Exchange}, \textbf{26}, (2000--01), 133--156.

\bibitem{christ} J. P. R. Christensen. On sets of Haar measure zero in abelian Polish groups, \emph{Israel J. Math.}, \textbf{13}, (1972), 255--260.


\bibitem{christ2} J. P. R. Christensen. \emph{Topology and Borel structure}, North-Holland, Amsterdam, 1974.

\bibitem{falconer}
K.~J. Falconer.
 {\em Fractal Geometry: Mathematical Foundations and Applications},
John Wiley, 2nd Ed., 2003.

\bibitem{me_horizon}
K.~J. Falconer and J.~M. Fraser.
The horizon problem for prevalent surfaces,  \emph{preprint} (2011).

\bibitem{genericmf}
A. Fraysse.
Generic validity of the multifractal formalism,
\emph{SIAM J. Math. Anal.}, \textbf{39}, (2007), 593--607.

\bibitem{genericprop}
A. Fraysse, S. Jaffard and J.-P. Kahane.
Quelques propri\'et\'es g\'en\'eriques en analyse,
\emph{C. R. Math. Acad. Sci. Paris}, \textbf{340}, (2005), 645--651.



\bibitem{lowerprevalent}
V. Gruslys, J. Jonu\v{s}as, V. Mijovi\`c, O. Ng, L. Olsen and I. Petrykiewicz.  Dimensions of prevalent continuous functions, \emph{preprint} (2010).

\bibitem{humkepacking}
 P.~D. Humke and G. Petruska.  The packing dimension of a typical continuous function is 2, {\em Bull. Amer. Math. Soc. (N.S.)}, {\bf 27}, (1988--89), 345--358.

\bibitem{prevalence1}
B.~R. Hunt, T. Sauer and J.~A. Yorke.  Prevalence:  a translational-invariant ``almost every'' on infinite dimensional spaces, {\em Real Anal. Exchange}, {\bf 14}, (1992), 17--26.

\bibitem{bairefunctions}
J.~T. Hyde, V. Laschos, L. Olsen, I. Petrykiewicz and A. Shaw. On the box dimensions of graphs of typical functions, \emph{preprint} (2010). 

\bibitem{FPconj}
S. Jaffard.  On the Frisch-Parisi conjecture, \emph{J. Math. Pures Appl.}, \textbf{79}, (2000), 525--552.


\bibitem{mattilamauldin}
P. Mattila and R.~D. Mauldin. Measure and dimension functions: measurability and densities, {\em Math. Proc. Camb. Phil. Soc.}, {\bf 121}, (1997), 81--100.

\bibitem{graphsums}
R. D. Mauldin and S. C. Williams. On the Hausdorff dimension of some graphs, {\em Trans. Amer. Math. Soc.}, {\bf 298}, (1986), 793--803.

\bibitem{mcclure}
M. McClure. The prevalent dimension of graphs, {\em Real Anal. Exchange}, {\bf 23} (1997), 241--246.

\bibitem{typlq1}
L. Olsen. Typical $L^q$-dimensions of measures,
\emph{Monathsh. Math.}, \textbf{146}, (2005), 143--157.

\bibitem{typlq2}
L. Olsen. Typical upper $L^q$-dimensions of measures for $q \in[0, 1]$,
\emph{Bull. Sci. Math.}, \textbf{132}, (2008), 551--561.

\bibitem{prevlq}
L. Olsen. Prevalent $L^q$-dimensions of measures,
\emph{Math. Proc. Camb. Phil. Soc.}, \textbf{149}, (2010), 553--571.

\bibitem{prevalence}
W. Ott and J. A. Yorke.  Prevalence, {\em Bull. Amer. Mat. Soc.}, {\bf 42}, (2005), 263--290.

\bibitem{oxtoby}
J. C. Oxtoby. \emph{Measure and Category},  Springer, 2nd Ed., 1996.

\bibitem{rogers}
C. A. Rogers. \emph{Hausdorff measures},  Cambridge University Press, 1998.



\bibitem{shaw}
A. Shaw. Prevalence, {\em M.Math Dissertation}, University of St. Andrews, (2010).





\end{thebibliography}
\end{document}